\newcommand{\ZZ}{\ensuremath{\mathbb{Z}}}
\DeclareMathOperator{\stab}{stab}
\newcommand{\R}{\mathbb{R}}
\newcommand{\rstablenk}{\Delta_{n,k}^{\stab(r)}}
\newcommand{\pnk}{P_{n,k}^{\stab(r)}}
\newcommand{\trinkr}{\nabla_{n,k}^r}
\newcommand{\rstablenkminus}{\Delta_{n,k}^{\stab(r-1)}}
\newcommand{\trinkrminus}{\nabla_{n,k}^{r-1}}
\newcommand{\trinkrdifference}{\max\nabla_{n,k}^{r-1}\backslash\max\nabla_{n,k}^{r}}
\newcommand{\floorminusstablenk}{P_{n,k}^{\stab\left(\left\lfloor\frac{n}{k}\right\rfloor-1\right)}}
\newcommand{\twostablenk}{\Delta_{n,k}^{\stab(2)}}
\newcommand{\hypernk}{\Delta_{n,k}}
\newcommand{\trink}{\nabla_{n,k}}
\newcommand{\hl}{H_\ell}
\newcommand{\hlhalf}{H_{\ell}^{(+)}}
\newcommand{\phlhalf}{K_{\ell}^{(+)}}
\newcommand{\phlnhalf}{K_n^{(-)}}
\newcommand{\hlr}{H_{\ell,r}}
\newcommand{\hlrhalf}{H_{\ell,r}^{(-)}}
\newcommand{\phlrhalf}{K_{\ell,r}^{(-)}}
\newcommand{\phlrhalfn}{\widetilde{K}_{\ell,r}^{(+)}}
\newcommand{\hlrminus}{H_{\ell,r-1}}
\newcommand{\nkfloor}{\left\lfloor\frac{n}{k}\right\rfloor}
\newcommand{\nkceiling}{\left\lceil\frac{n}{k}\right\rceil}
\newtheorem{thm}{Theorem}[section]
 \newtheorem{cor}[thm]{Corollary}
 \newtheorem{lem}[thm]{Lemma}
 \theoremstyle{definition}
 \newtheorem{defn}[thm]{Definition}
 \theoremstyle{remark}
 \newtheorem{rem}[thm]{Remark}
 \numberwithin{equation}{section}
\DeclareMathOperator{\relint}{relint}
\begin{document}

\begin{abstract}
Let $k, n$ and $r$ be positive integers with $k < n$ and $r\leq\nkfloor$. 
We determine the facets of the $r$-stable $n,k$-hypersimplex.  
As a result, it turns out that the $r$-stable $n,k$-hypersimplex has exactly $2n$ facets for every $r<\nkfloor$.  
We then utilize the equations of the facets to study when the $r$-stable hypersimplex is Gorenstein.
For every $k>0$ we identify an infinite collection of Gorenstein $r$-stable hypersimplices, consequently expanding the collection of $r$-stable hypersimplices known to have unimodal Ehrhart $\delta$-vectors.
\end{abstract}

\title{Facets of the $r$-stable $n,k$-hypersimplex}
\author{Takayuki Hibi}
\address{Department of Pure and Applied Mathematics\\
Graduate School of Information Science and Technology\\
Osaka University, Toyonaka, Osaka 560--0043, Japan}
\email{hibi@math.sci.osaka-u.ac.jp}

\author{Liam Solus}
\address{Department of Mathematics\\
         University of Kentucky\\
         Lexington, KY 40506--0027, USA}
\email{liam.solus@uky.edu}

\date{1 June 2015}

\thanks{Liam Solus was supported by a 2014 National Science Foundation/Japan Society for the Promotion of Science East Asia and Pacific Summer Institute Fellowship.}

\subjclass{Primary 52B05; Secondary 52B20}


\keywords{r-stable hypersimplex, hypersimplex, facet, Gorenstein}

\maketitle

\section{Introduction}

The $(n,k)$-hypersimplices are an important collection of integer polytopes arising naturally in the settings of convex optimization, matroid theory, combinatorics, and algebraic geometry.  
Generalizing the standard $(n-1)$-simplex, the $(n,k)$-hypersimplices serve as a useful collection of examples in these various contexts.  
While these polytopes are well-studied, there remain interesting open questions about their properties in the field of Ehrhart theory, the study of integer point enumeration in dilations of rational polytopes  (see for example \cite{del}).  
The $r$-stable $(n,k)$-hypersimplices are a collection of lattice polytopes within the $(n,k)$-hypersimplex that were introduced in \cite{braunsol} for the purpose of studying unimodality of the Ehrhart $\delta$-polynomials of the $(n,k)$-hypersimplices.  
However, they also exhibit interesting geometric similarities to the $(n,k)$-hypersimplices which they generalize.  
For example, it is shown in \cite{braunsol} that a regular unimodular triangulation of $(n,k)$-hypersimplex, called the circuit triangulation, restricts to a triangulation of each $r$-stable $(n,k)$-hypersimplex.  

In the present paper, we compute the facets of the $r$-stable $(n,k)$-hypersimplices for $1\leq r<\nkfloor$ and then study when they are Gorenstein. 
In section \ref{facets}, we compute their facet-defining inequalities (Theorem \ref{main theorem}).  
From these computations, we see that the geometric similarities between the $(n,k)$-hypersimplex and the $r$-stable $(n,k)$-hypersimplices within are apparent in their minimal $H$-representations.  
Moreover, it turns out that each $r$-stable $(n,k)$-hypersimplex has exactly $2n$ facets for $1\leq r<\nkfloor$ (Corollary \ref{2nfacets}). 
In section \ref{gorenstein}, we classify $1 \leq r < \left\lfloor \frac{n}{k} \right\rfloor$ for which these polytopes are Gorenstein (Theorem \ref{gorensteinrstablethm}).
We conclude that the Ehrhart $\delta$-vector of each Gorenstein $r$-stable hypersimplex is unimodal (Corollary \ref{unimodalgorenstein}), thereby expanding the collection or $r$-stable hypersimplices known to have unimodal $\delta$-polynomials.

\section{The $H$-representation of the $r$-stable $(n,k)$-hypersimplex}
\label{facets}

We first recall the definitions of the $(n,k)$-hypersimplices and the $r$-stable $(n,k)$-hypersimplices.  
For integers $0<k<n$ let $[n]:=\{1,2,\ldots,n\}$ and let ${[n] \choose k}$ denote the collection of all $k$-subsets of $[n]$. 
The \emph{characteristic vector} of a subset $I$ of $[n]$ is 
the $(0,1)$-vector 
$\epsilon_I=(\epsilon_1,\ldots,\epsilon_n)$ 
for which $\epsilon_i=1$ for $i\in I$ and $\epsilon_i=0$ for $i\notin I$.   
The \emph{$(n,k)$-hypersimplex} is the convex hull in $\R^n$ of the collection of characteristic vectors $\{\epsilon_I : I\in{[n]\choose k}\}$, and it is denoted $\hypernk$.  
Label the vertices of a regular $n$-gon embedded in $\R^2$ 
in a clockwise fashion from $1$ to $n$.  
Given a third integer $1\leq r\leq\nkfloor$, a subset $I\subset[n]$ (and its characteristic vector) is called \emph{$r$-stable} if, for each pair $i,j\in I$, 
the path of shortest length from $i$ to $j$ about the $n$-gon uses 
at least $r$ edges.  
The \emph{$r$-stable $n,k$-hypersimplex}, denoted by $\rstablenk$, 
is the convex polytope in $\R^{n}$ which is the convex hull
of the characteristic vectors of all $r$-stable $k$-subsets of $[n]$. 
For a fixed $n$ and $k$ the $r$-stable $(n,k)$-hypersimplices form the nested chain of polytopes
\begin{equation*}
\Delta_{n,k}\supset\Delta_{n,k}^{\stab(2)}\supset\Delta_{n,k}^{\stab(3)}\supset\cdots\supset\Delta_{n,k}^{\stab\left(\left\lfloor\frac{n}{k}\right\rfloor\right)}.
\end{equation*}
Notice that $\hypernk$ is precisely the $1$-stable $(n,k)$-hypersimplex.

The definitions of $\hypernk$ and $\rstablenk$ provided are $V$-representations of these polytopes.  
In this section we provide the minimal $H$-representation of $\rstablenk$, i.e. its collection of facet-defining inequalities.  
It is well-known that the facet-defining inequalities of $\hypernk$ are $\sum_{i=1}^nx_i=k$ together with $x_\ell\geq0$ and $x_\ell\leq1$ for all $\ell\in[n]$.  
Let $H$ denote the hyperplane in $\R^n$ defined by the equation
$\sum_{i=1}^nx_i=k.$
For $1\leq r\leq \nkfloor$  and $\ell\in[n]$ consider the closed convex subsets of $\R^n$ 
$$
\hlhalf:=\left\{(x_1,x_2,\ldots,x_n)\in\R^n: x_\ell\geq0\right\}\cap H, \mbox{ and}
$$
$$
\hlrhalf:=\left\{(x_1,x_2,\ldots,x_n)\in\R^n: \sum_{i=\ell}^{\ell+r-1}x_i\leq1\right\}\cap H.
$$
In the definition of $\hlrhalf$ the indices $i$ of the coordinates $x_1,\ldots,x_n$ are taken to be elements of $\ZZ/n\ZZ$.  
We also let $\hl$ and $\hlr$ denote the $(n-2)$-flats given by strict equality in the above definitions.  
In the following we will say an $(n-2)$-flat is \emph{facet-defining} (or \emph{facet-supporting}) for $\rstablenk$ if it contains a facet of $\rstablenk$.

\begin{thm}
\label{main theorem}
Let $1<k<n-1$.  
For $1\leq r<\nkfloor$ the facet-defining inequalities for $\rstablenk$ are $\sum_{i=1}^nx_i=k$ together with $\sum_{i=\ell}^{\ell+r-1}x_i\leq1$ and $x_\ell\geq0$ for $\ell\in[n]$.
In particular,
$$
\rstablenk = \bigcap_{\ell\in[n]}\hlhalf\cap\bigcap_{\ell\in[n]}\hlrhalf.
$$
\end{thm}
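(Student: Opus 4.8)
The strategy is to prove the two inclusions
\[
\rstablenk \subseteq \bigcap_{\ell\in[n]}\hlhalf\cap\bigcap_{\ell\in[n]}\hlrhalf
\qquad\text{and}\qquad
\bigcap_{\ell\in[n]}\hlhalf\cap\bigcap_{\ell\in[n]}\hlrhalf \subseteq \rstablenk,
\]
and then to argue that none of the listed inequalities is redundant, so that the $H$-representation obtained is minimal and the flats $\hl$ and $\hlr$ are genuinely facet-supporting. The forward inclusion is the easy direction: every vertex $\epsilon_I$ of $\rstablenk$ satisfies $\sum_i x_i = k$ by definition, satisfies $x_\ell\geq 0$ trivially, and satisfies $\sum_{i=\ell}^{\ell+r-1}x_i\leq 1$ precisely because $I$ is $r$-stable — any window of $r$ consecutive vertices on the $n$-gon can contain at most one element of an $r$-stable set, as two elements in such a window would be joined by a path of fewer than $r$ edges. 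Since these inequalities hold at every vertex, they hold on the whole polytope.

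**The reverse inclusion.** This is where the real work lies. Let $Q$ denote the polytope $\bigcap_{\ell}\hlhalf\cap\bigcap_{\ell}\hlrhalf$; one must show every vertex of $Q$ is the characteristic vector of an $r$-stable $k$-subset. The cleanest route is: first observe $Q\subseteq\hypernk$ (since $Q$ satisfies $\sum x_i = k$, all $x_\ell\geq 0$, and for each $\ell$, $x_\ell\le\sum_{i=\ell}^{\ell+r-1}x_i\le 1$), so $Q$ is contained in a $0/1$-polytope and all its vertices are $0/1$-vectors, hence characteristic vectors $\epsilon_I$ of $k$-subsets $I$. Then the constraints $\sum_{i=\ell}^{\ell+r-1}x_i\leq 1$ force each such $I$ to be $r$-stable: if $i,j\in I$ were at cyclic distance $<r$, some length-$r$ window would contain both, violating the inequality. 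Hence every vertex of $Q$ is a vertex of $\rstablenk$, giving $Q\subseteq\rstablenk$ and, combined with the forward inclusion, equality. I expect a subtlety here if $\rstablenk$ is not full-dimensional or if some $r$-stable set fails to exist for edge-case parameters, but the hypotheses $1<k<n-1$ and $1\le r<\nkfloor$ should rule these out; in particular $r<\nkfloor$ guarantees $r$-stable $k$-subsets exist in abundance.

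**Minimality and facet-supporting property.** It remains to check that each inequality $x_\ell\geq 0$ and each $\sum_{i=\ell}^{\ell+r-1}x_i\leq 1$ actually defines a facet, equivalently that the flats $\hl$ and $\hlr$ meet $\rstablenk$ in a face of dimension $n-2$ (note $\dim\rstablenk = n-1$ since it lies in $H$). The standard technique is to exhibit, for each such flat, an explicit affinely independent collection of $n-1$ vertices of $\rstablenk$ lying on it — i.e., $r$-stable $k$-subsets $I$ with $\ell\notin I$ in the first case, or with exactly one element in the window $\{\ell,\ell+1,\ldots,\ell+r-1\}$ in the second. Constructing these by a rotation/relabeling argument (using the cyclic $\ZZ/n\ZZ$ symmetry to reduce to a single $\ell$) and verifying affine independence is the main computational obstacle; the combinatorial heart is producing enough $r$-stable sets subject to the stated membership constraint, which is plausible precisely because $r<\nkfloor$ leaves room to "spread out" the remaining $k-1$ or $k$ elements. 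Once each listed inequality is shown facet-defining, no proper subcollection can cut out $\rstablenk$, so the representation is the minimal $H$-representation, completing the proof.
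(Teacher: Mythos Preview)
Your reverse inclusion contains a genuine gap. You write that since $Q\subseteq\hypernk$, the polytope $Q$ ``is contained in a $0/1$-polytope and all its vertices are $0/1$-vectors.'' This inference is false in general: a polytope contained in a $0/1$-polytope need not have $0/1$ vertices (take, for instance, any triangle inside the unit square with one vertex at $(1/2,1)$). The vertices of $Q$ are determined by solving $n-1$ of the defining equalities inside $H$, and nothing you have said rules out fractional solutions. In fact, establishing that $Q$ has only $0/1$ vertices is essentially equivalent to the theorem itself. One can repair this by observing that $Q$ is an \emph{alcoved polytope} in the sense of Lam--Postnikov (its constraints are bounds on cyclic partial sums), and then invoking their result that such polytopes are lattice polytopes triangulated by the circuit triangulation---but that is substantial extra input you have not cited, and at that point you are very close to the paper's own machinery.

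The paper takes a rather different route: it never analyses the vertices of $Q$ directly. Instead it proves by induction on $r$ that the facet-supporting $(n-2)$-flats of $\rstablenk$ are exactly the $H_\ell$ and the $H_{\ell,r}$. The induction step splits the facets of $\rstablenk$ into those inherited from $\rstablenkminus$ and those whose supporting flat meets $\relint\rstablenkminus$; for the latter, the paper uses adjacent pairs of maximal simplices in the circuit triangulation $\trinkrminus$ (one inside $\trinkr$, one not) to pin the flat down as some $H_{\ell,r}$. Your minimality step---exhibiting $n-1$ affinely independent $r$-stable vertices on each $H_\ell$ and $H_{\ell,r}$---is in the same spirit as the paper's Lemmas~\ref{hlpreserved} and~\ref{allthenewhyperplanes}, which accomplish exactly this by explicitly constructing a simplex $\omega^\ell\in\max\trinkr$ with a facet on the given flat; you would still need to carry out that construction.
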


The following is an immediate corollary to these results.
\begin{cor}\label{2nfacets}
All but possibly the smallest polytope in the nested chain
\begin{equation*}
\Delta_{n,k}\supset\Delta_{n,k}^{\stab(2)}\supset\Delta_{n,k}^{\stab(3)}\supset\cdots\supset\Delta_{n,k}^{\stab\left(\left\lfloor\frac{n}{k}\right\rfloor\right)}
\end{equation*}
has 2n facets.  
\end{cor}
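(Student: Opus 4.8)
The plan is to read off the facet count directly from Theorem~\ref{main theorem}; the only real work is a distinctness check among the supporting flats it lists. Keeping the standing hypothesis $1<k<n-1$, fix $r$ with $1\le r<\nkfloor$, so that $\rstablenk$ is a member of the chain other than the smallest one, $\floorstablenk$. By Theorem~\ref{main theorem}, a minimal $H$-representation of $\rstablenk$ is given by the equation $\sum_{i=1}^{n}x_i=k$ together with the $n$ inequalities $x_\ell\ge 0$ and the $n$ inequalities $\sum_{i=\ell}^{\ell+r-1}x_i\le 1$ for $\ell\in[n]$; in particular $\rstablenk$ is $(n-1)$-dimensional and has at most $2n$ facets, supported by the $(n-2)$-flats $\hl$ and $\hlr$, $\ell\in[n]$. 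It therefore remains only to check that these $2n$ flats are pairwise distinct, which then gives exactly $2n$ facets.

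For the distinctness check I would first record the elementary bound $2r\le 2\bigl(\nkfloor-1\bigr)\le n-2$, which holds because $k\ge 2$. Consequently the support $W_\ell:=\{\ell,\ell+1,\dots,\ell+r-1\}\pmod n$ of each linear form $\sum_{i=\ell}^{\ell+r-1}x_i$ is a proper subset of $[n]$, and the union of any two of the relevant supports (these being sets of the form $W_\ell$ or $\{\ell\}$) omits at least one coordinate. Now recall that two supporting hyperplanes $\{f=c\}\cap H$ and $\{g=d\}\cap H$ of the $(n-1)$-dimensional polytope $\rstablenk$ coincide only if $f-\lambda g=\mu\sum_{i=1}^{n}x_i$ and $c-\lambda d=\mu k$ for some $\lambda\neq 0$ and $\mu\in\R$. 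One then runs through the three comparisons — $\hl$ against $H_m$ with $\ell\neq m$, $\hlr$ against $H_{m,r}$ with $\ell\neq m$, and $\hl$ against $H_{m,r}$ — using the support bound to pick a coordinate lying in neither linear form, which forces $\mu=0$, and then a coordinate lying in exactly one of them, which forces $\lambda=0$: a contradiction. The only subcase not settled this way is $r=1$ with $\hl$ and $\hlr$ both supported by the form $x_\ell$, where the contradiction instead comes from the constant terms $0\neq 1$. Hence the $2n$ flats are distinct, and $\rstablenk$ has exactly $2n$ facets.

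The lone polytope not covered is the smallest, $\floorstablenk$, since $r=\nkfloor$ lies outside the hypotheses of Theorem~\ref{main theorem}; this is the source of the qualifier ``possibly'' (and when $\nkfloor=1$ the chain is a single polytope, so the assertion is vacuous). I expect no serious obstacle here: the only step requiring genuine care is the coefficient comparison, where one must track the cyclic indexing of the windows and invoke $2r<n$ to guarantee that the witnessing coordinates exist; everything else is immediate from Theorem~\ref{main theorem}.
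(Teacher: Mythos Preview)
Your argument is correct. In the paper the corollary is stated as an immediate consequence of Theorem~\ref{main theorem} with no separate proof, so your approach---deducing the facet count from the $H$-representation and then supplying the pairwise distinctness check among the flats $\hl$ and $\hlr$ that the paper leaves implicit---is essentially the same route, just carried out in more detail.
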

This is an interesting geometric property since the number of vertices of these polytopes strictly decreases down the chain.
To prove Theorem~\ref{main theorem} we will utilize the geometry of the circuit triangulation of $\hypernk$ as defined in \cite{lam}, the construction of which we will now recall.


\subsection{The circuit triangulation.}
Fix $0<k<n$, and let $G_{n,k}$ be the labeled, directed graph with the following vertices and edges.  
The vertices of $G_{n,k}$ are all the vectors $\epsilon_I\in\R^n$ where $I$ is a $k$-subset of $[n]$.  
We think of the indices of a vertex of $G_{n,k}$ modulo $n$.
Now suppose that $\epsilon$ and $\epsilon^\prime$ are two vertices of $G_{n,k}$ such that for some $i\in[n]$ $(\epsilon_i,\epsilon_{i+1})=(1,0)$ and $\epsilon^\prime$ is obtained from $\epsilon$ by switching the order of $\epsilon_i$ and $\epsilon_{i+1}$.
Then the directed and labeled edge $\epsilon\overset{i}\rightarrow \epsilon^\prime$ is an edge of $G_{n,k}$.
Hence, an edge of $G_{n,k}$ corresponds to a move of a single 1 in a vertex $\epsilon$ one spot to the right, and such a move can be done if and only if the next spot is occupied by a 0.

We are interested in the circuits of minimal length in the graph $G_{n,k}$.
Such a circuit is called a \emph{minimal circuit}.
Suppose that $\epsilon$ is a vertex in a minimal circuit of $G_{n,k}$.
Then the minimal circuit can be thought of as a sequence of edges in $G_{n,k}$ that moves each 1 in $\epsilon$ into the position of the 1 directly to its right (modulo $n$).  
It follows that a minimal circuit in $G_{n,k}$ has length $n$.
An example of a minimal circuit in $G_{9,3}$ is provided in Figure \ref{fig:facet proof example}.
Notice that for a fixed initial vertex of the minimal circuit the labels of the edges form a permutation $\omega=\omega_1\omega_2\cdots\omega_n\in S_n$, the symmetric group on $n$ elements.   
Following the convention of \cite{lam}, we associate a minimal circuit in $G_{n,k}$ with the permutation consisting of the labels of the edges of the circuit for which $\omega_n=n$.
Let $(\omega)$ denote the minimal circuit in $G_{n,k}$ corresponding to the permutation $\omega\in S_n$ with $\omega_n=n$.
Let $\sigma_{(\omega)}$ denote the convex hull in $\R^n$ of the set of vertices of $(\omega)$.   
Notice that $\sigma_{(\omega)}$ will always be an $(n-1)$-simplex.
\begin{thm}
\cite[Lam and Postnikov]{lam}
\label{circuit triangulation theorem}
The collection of simplices $\sigma_{(\omega)}$ given by the minimal circuits in $G_{n,k}$ are the maximal simplices of a triangulation of the hypersimplex $\Delta_{n,k}$.  We call this triangulation the \emph{circuit triangulation}.  
\end{thm}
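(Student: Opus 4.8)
The plan is to verify directly the three defining properties of a triangulation for the collection $\{\sigma_{(\omega)}\}$: that each $\sigma_{(\omega)}$ is a genuine $(n-1)$-simplex, that distinct circuit simplices meet only along common faces (equivalently, have disjoint interiors), and that their union is all of $\hypernk$. Since each $\sigma_{(\omega)}$ will turn out to be \emph{unimodular}, the cleanest route is to realize $\hypernk$ as an \emph{alcoved polytope} in the sense of Lam--Postnikov and to identify the circuit simplices with the alcoves of the affine type-$A$ Coxeter arrangement that $\hypernk$ contains; disjointness and covering then follow from the geometry of that arrangement rather than from ad hoc computations.

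First I would pin down the affine-linear structure of a single circuit simplex. Order the vertices $v_0, v_1, \ldots, v_{n-1}, v_n = v_0$ of the circuit $(\omega)$ along the directed cycle, so that $v_m - v_{m-1} = \e_{\omega_m + 1} - \e_{\omega_m}$ with indices taken modulo $n$, where $\e_j$ is the $j$-th standard basis vector. Because the edge labels $\omega_1,\ldots,\omega_n$ form a permutation of $[n]$, the step vectors are exactly the $n$ roots $\e_{j+1}-\e_j$ of the affine arrangement, and they sum to $0$ since the cycle closes up. The convention $\omega_n = n$ means that the step omitted in forming the $n-1$ edge vectors $v_m - v_0$ is the ``wrap-around'' root $\e_1 - \e_n$, leaving the standard simple roots $\e_2-\e_1,\ldots,\e_n-\e_{n-1}$, which are a $\ZZ$-basis of the direction lattice $\{x\in\ZZ^n : \sum_i x_i = 0\}$ of $H$. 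The edge vectors $v_m - v_0$ are the partial sums of these roots, i.e. their images under a unit lower-triangular change of basis, so $\{v_m - v_0\}_{m=1}^{n-1}$ is again a lattice basis. Hence $\sigma_{(\omega)}$ is an $(n-1)$-dimensional unimodular simplex, refining the remark already recorded in the excerpt.

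The main work is disjointness and covering, and here I would invoke the arrangement $\mathcal{A}$ of hyperplanes $x_i + x_{i+1} + \cdots + x_j = c$ indexed by cyclic intervals $[i,j]$ and $c\in\ZZ$, whose chambers (alcoves) are unimodular simplices tiling $H$. The claim to establish is that each $\sigma_{(\omega)}$ is precisely one such alcove: every edge step $\e_{\omega_m+1}-\e_{\omega_m}$ crosses exactly one wall of $\mathcal{A}$, and reading off the cyclic partial-sum inequalities satisfied along the circuit exhibits $\sigma_{(\omega)}$ as the region cut out by $n$ consecutive walls. Granting this identification, distinct circuits yield distinct alcoves, so their interiors are automatically disjoint and their pairwise intersections are common faces; and any point of $\hypernk$ lies in the $\relint$ of some alcove, which one checks to be a circuit alcove by running the ``shift a $1$ to the right'' moves to recover its hosting circuit. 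This matching of the purely combinatorial circuit data with the walls of $\mathcal{A}$, uniformly over all starting vertices, is the principal obstacle of the argument.

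Finally, I would close with a volume count, both as a consistency check and as an independent route to covering. The number of minimal circuits --- permutations $\omega$ with $\omega_n = n$ that are realizable for the given $k$ --- equals the Eulerian number $A(n-1,k-1)$, which is exactly the normalized volume $\vol(\hypernk)$. Since the $A(n-1,k-1)$ unimodular simplices $\sigma_{(\omega)}$ have pairwise disjoint interiors and total normalized volume $A(n-1,k-1) = \vol(\hypernk)$, they must tile $\hypernk$ with no overlap and no gap. Together with the unimodularity and face-compatibility established above, this shows that $\{\sigma_{(\omega)}\}$ is a triangulation of $\hypernk$.
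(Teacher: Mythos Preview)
The paper does not supply a proof of this theorem; it is quoted as a result of Lam and Postnikov \cite{lam} and used as a black box thereafter. There is therefore nothing in the present paper against which to compare your argument.

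That said, your sketch is a faithful outline of how Lam and Postnikov actually establish the result in \cite{lam}: they realize $\hypernk$ as an alcoved polytope for the affine type-$A$ arrangement, identify the alcoves it contains with the minimal circuits of $G_{n,k}$, and count them via the Eulerian numbers. Your treatment of unimodularity via the lattice-basis argument on the partial sums of simple roots is correct, and the volume count is the standard closing move. The only place your proposal is genuinely thin is the identification of circuit simplices with alcoves---you flag this yourself (``Granting this identification\ldots'')---and indeed that bijection is the substance of the Lam--Postnikov argument, requiring a careful translation between permutations $\omega$ with $\omega_n=n$ and the cyclic-interval inequalities cutting out an alcove. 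If you intend this as a self-contained proof rather than a roadmap, that step would need to be written out in full; as a summary of the cited source it is accurate.
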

Denote the circuit triangulation of $\hypernk$ by $\trink$, and let $\max\trink$ denote the set of maximal simplices of $\trink$.
To simplify notation we will write $\omega$ to denote the simplex $\sigma_{(\omega)}\in\max\trink$.
In \cite{braunsol} it is shown that the collection of simplices in $\trink$ that lie completely within $\rstablenk$ form a triangulation of this polytope.  
We let $\trinkr$ denote this triangulation of $\rstablenk$ and let $\max\trinkr$ denote the set of maximal simplices of $\trinkr$.  
In the following, we compute the facet-defining inequalities for $\rstablenk$ using the nesting of triangulations:
\begin{equation*}
\nabla_{n,k}\supset\nabla_{n,k}^2\supset\nabla_{n,k}^3\supset\cdots\supset\nabla_{n,k}^{\left\lfloor\frac{n}{k}\right\rfloor}.
\end{equation*}
The method by which we will do this is outlined in the following remark.

\begin{rem}
\label{facet proof outline}
To compute the facet-defining inequalities of $\rstablenk$ we first consider the geometry of their associated facet-defining $(n-2)$-flats.
Suppose that $\rstablenk$ is $(n-1)$-dimensional.  
Since $\rstablenkminus\supset\rstablenk$ then a facet-defining $(n-2)$-flat of $\rstablenk$ either also defines a facet of $\rstablenkminus$ or it intersects $\relint\rstablenkminus$, the relative interior of $\rstablenkminus$.  
Therefore, to compute the facet-defining $(n-2)$-flats of $\rstablenk$ it suffices to compute the former and latter collections of $(n-2)$-flats independently.  
To identify the former collection we will use an induction argument on $r$.  
To identify the latter collection we work with pairs of adjacent $(n-1)$-simplices in the set $\max\trinkr$.  
Note that two simplices $u,\omega\in\max\trinkr$ are adjacent (i.e. share a common facet) if and only if they differ by a single vertex.  
Therefore, their common vertices span an $(n-2)$-flat which we will denote by $H_{u,\omega}$.  
Thus, we will identify adjacent pairs of simplices $u\in\max\trinkrminus$ and 
\\ $\omega\in\trinkrdifference$ for which $H_{u,\omega}$ is facet-defining.     
\end{rem}


\subsection{Computing facet-defining inequalities via a nesting of triangulations.}
Suppose $1<k<n-1$.  
In order to prove Theorem~\ref{main theorem} in the fashion outlined by Remark~\ref{facet proof outline} we require a sequence of lemmas.  
Notice that $\rstablenk$ is contained in $\hlhalf$ and $\hlrhalf$ for all $\ell\in[n]$.  
So in the following we simply show that $\hl$ and $\hlr$ form the complete set of facet-defining $(n-2)$-flats.

\begin{lem}
\label{hlpreserved}
Let $1\leq r<\left\lfloor\frac{n}{k}\right\rfloor$.  
For all $\ell\in[n]$, $\hl$ is facet-defining for $\rstablenk$.  
\end{lem}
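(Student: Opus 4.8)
The plan is to exhibit, for each $\ell \in [n]$, an explicit set of $n-1$ affinely independent $r$-stable $k$-subsets of $[n]$ whose characteristic vectors all lie on the flat $\hl = \{x_\ell = 0\} \cap H$. Since $\rstablenk \subset \hlhalf$ always, producing $n-1$ affinely independent lattice points of $\rstablenk$ inside $\hl$ forces $\hl \cap \rstablenk$ to be a facet. By the cyclic symmetry of $\rstablenk$ (rotation of the $n$-gon permutes the coordinates and preserves $r$-stability), it suffices to treat a single value of $\ell$, say $\ell = n$; so the goal reduces to finding $n-1$ affinely independent $r$-stable $k$-sets avoiding the element $n$.

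The concrete construction I would use is the following. Since $1 \le r < \nkfloor$, we have $rk < n$, so there is genuine room: a generic $r$-stable $k$-set living in $[n-1]$ can be ``slid around'' the arc $\{1,2,\dots,n-1\}$. Start from a fixed $r$-stable $k$-set $I_0 \subset [n-1]$ whose smallest element is $1$ (e.g. $I_0 = \{1, 1+r, 1+2r, \dots, 1+(k-1)r\}$, which fits because $1+(k-1)r \le n-1$). Then one can incrementally move the elements of $I_0$ to the right one step at a time along the path $1,2,\dots,n-1$ without ever creating a pair at cyclic distance less than $r$ and without ever touching $n$ — this is exactly the kind of move encoded by the edges of $G_{n,k}$ restricted to vertices with $\epsilon_n = 0$. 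Each such single-step move produces a new $r$-stable $k$-set in $[n-1]$, and recording these sets in order gives a sequence of characteristic vectors whose successive differences are the distinct standard-basis-vector differences $\e_{i+1} - \e_i$ for $i$ ranging over $[n-2]$ (in the order the moves are performed). These $n-2$ difference vectors are linearly independent, so together with $I_0$ we obtain $n-1$ affinely independent points, all with $\ell$-th coordinate $0$.

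The main obstacle is bookkeeping: one must verify (i) that the chosen sequence of right-shifts can always be carried out while staying $r$-stable — i.e. that when an element is about to be pushed into position $j$, the position $j$ is empty and the new configuration still has all cyclic gaps $\ge r$, which uses $rk < n$ in an essential way — and (ii) that the collected difference vectors are exactly $\{\e_{i+1}-\e_i : i \in [n-2]\}$ with no repeats, so that the rank is $n-2$. A clean way to organize (i) is to move the elements from right to left: first slide the largest element of $I_0$ as far right as it can legally go (staying in $[n-1]$ and $r$ away from the wraparound neighbor), then the next largest, and so on; alternatively, one can simply quote the existence of the circuit triangulation (Theorem~\ref{circuit triangulation theorem}) and the fact from \cite{braunsol} that $\trinkr$ triangulates $\rstablenk$, picking a maximal simplex $\omega \in \max \trinkr$ that has a facet lying in $\hl$ and checking that simplex's vertices directly.

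Once the $n-1$ affinely independent points on $\hl$ are in hand, the conclusion is immediate: $\hl \cap \rstablenk$ is a polytope of dimension $\ge n-2$ contained in the hyperplane $\hl$ within the affine span $H$ of $\rstablenk$, hence it has dimension exactly $n-2$ and is a face of $\rstablenk$ cut out by the valid inequality $x_\ell \ge 0$, i.e. a facet. Applying the cyclic rotation symmetry then yields the statement for all $\ell \in [n]$.
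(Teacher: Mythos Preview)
Your approach is essentially the paper's: exhibit a maximal simplex of $\trinkr$ with a facet on $\hl$, equivalently $n-1$ affinely independent $r$-stable vertices lying in $\{x_\ell=0\}$, obtained by single right-shifts in $G_{n,k}$. The paper carries this out by writing down an explicit minimal circuit $(\omega^\ell)$ all of whose vertices are $r$-stable and exactly one of which has $x_\ell=1$; the remaining $n-1$ vertices are then automatically affinely independent and lie on $\hl$.

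One caution about your concrete schedule. Sliding the elements ``from right to left'' (push $1_k$ as far right as possible, then $1_{k-1}$, etc.) does not stay $r$-stable once $r\ge 3$: with $I_0=\{1,1+r,\dots,1+(k-1)r\}$ and $\ell=n$, moving $1_k$ to position $n-1$ while $1_1$ is still at position $1$ puts these two at cyclic distance $2<r$. The paper's schedule avoids this by first nudging $1_1$ one step, then cycling through $1_1,1_2,\dots,1_k$ in rounds so that the wrap-around gap never drops below $r$; the interleaving is exactly the content of the ``main obstacle'' you flag. Your alternative---pick $\omega\in\max\trinkr$ with a facet on $\hl$ via the circuit triangulation---is precisely what the paper does, so once you write down a valid interleaved move pattern (or just cite the paper's), the proof goes through as you outline.
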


\begin{proof}
First notice that the result clearly holds for $r=1$.  
So we need only show that $n-1$ affinely independent vertices of $\rstablenk$ lie in $\hl$.  
Hence, to prove the claim it suffices to identify a simplex $\omega\in\max\trinkr$ such that $\hl$ supports a facet of $\omega$.  
Since $r\leq\nkfloor-1$ it also suffices to work with $r = \nkfloor-1$.  

Fix $\ell\in[n]$.  
For $r=\nkfloor-1$ we construct a minimal circuit in the graph $G_{n,k}$ that corresponds to a simplex in $\max\trinkr$ for which $\hl$ is facet-supporting.
To this end, consider the characteristic vector of the $k$-subset 
 $\left\{(\ell-1)-(s-1)r : s\in[k]\right\}\subset[n].$
Denote this characteristic vector by $\epsilon^\ell$, and think of its indices modulo $n$.  
Labeling the $1$ in coordinate $(\ell-1)-(s-1)r$ of $\epsilon^\ell$ as $1_s$, we see that $1_s$ and $1_{s+1}$ are separated by $r-1$ zeros for $s\in[k-1]$. 
That is, the coordinate $\epsilon^\ell_i=0$ for every $(\ell-1)-sr<i<(\ell-1)-(s-1)r$ (modulo $n$), and there are precisely $r-1$ such coordinates.
Moreover, since $kr = k\left(\nkfloor-1\right)\leq n$ then there are at least $r-1$ zeros between $1_1$ and $1_k$.
Hence, this vertex is $r$-stable.
From $\epsilon^\ell$ we can now construct an $r$-stable circuit $(\omega^\ell)$ by moving the $1$'s in $\epsilon^\ell$ one coordinate to the right (modulo $n$), one at a time, in the following pattern:
%
%
	\begin{enumerate}[(1)]
		\item Move $1_1$.
		\item Move $1_1$. Then move $1_2$.  Then move $1_{3}$. $\ldots$ 			Then move $1_k$.
		\item Repeat step (2) $r-1$ more times.
		\item Move $1_1$ until it rests in entry $\ell-1$.
	\end{enumerate}
An example of $(\omega^\ell)$ for $n=9, k=3,$ and $\ell=5$ is provided in Figure~\ref{fig:facet proof example}.
\begin{figure}
	\centering
	\includegraphics[width=0.7\textwidth]{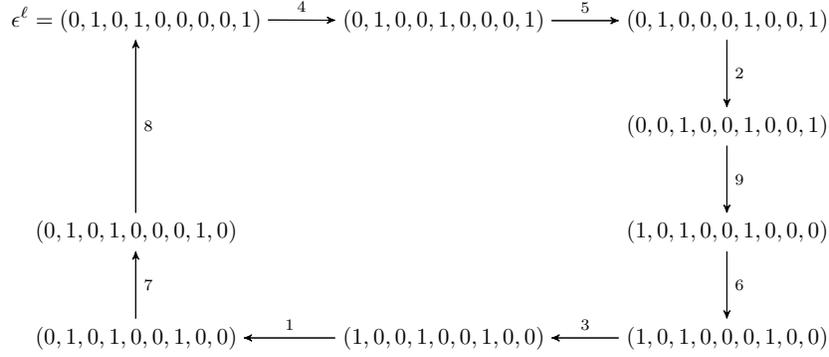}
	\caption{The minimal circuit $(\omega^\ell)$ for $n=9,k=3,$ and $\ell=5$ constructed in Lemma~\ref{hlpreserved}.}
	\label{fig:facet proof example}
\end{figure}
This produces a minimal circuit in $G_{n,k}$ since each $1_s$ has moved precisely enough times to replace $1_{s+1}$.  
Moreover, since $k>1$ then $k\left(\nkfloor-1\right)\leq n-2$.  
So there are at least $r+1$ $0$'s between $1_1$ and $1_k$ in $\epsilon^\ell$.
From here, it is a straight-forward exercise to check that every vertex in $(\omega^\ell)$ is $r$-stable.  
Therefore, $\omega^\ell\in\max\trinkr$.  
Finally, since $r>1$, the simplex $\omega^\ell$ has only one vertex satisfying $x_\ell=1$, and this is the vertex following $\epsilon^\ell$ in the circuit $(\omega^\ell)$.  
Hence, all other vertices of $\omega^\ell$ satisfy $x_\ell=0$.  
So $\hl$ supports a facet of $\omega^\ell$.  
Thus, we conclude that $\hl$ is facet-defining for $\rstablenk$ for $r<\left\lfloor\frac{n}{k}\right\rfloor$.
\end{proof}

The following theorem follows immediately from the construction of the $(n-1)$-simplex $\omega^\ell$ in the proof of Lemma~\ref{hlpreserved}, and it justifies the assumption on the dimension of $\rstablenk$ made in Remark~\ref{facet proof outline}.  
\begin{thm}
\label{dimension theorem}
The polytope $\rstablenk$ is $(n-1)$-dimensional for all $r<\left\lfloor\frac{n}{k}\right\rfloor$.
\end{thm}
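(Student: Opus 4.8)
The plan is to read the statement off directly from the proof of Lemma~\ref{hlpreserved}. In that proof, for the value $r=\nkfloor-1$ and for a fixed $\ell\in[n]$, we constructed a minimal circuit $(\omega^\ell)$ in $G_{n,k}$ every vertex of which is $r$-stable, so that $\omega^\ell\in\max\trinkr$. Recall from the discussion preceding Theorem~\ref{circuit triangulation theorem} that the convex hull of the vertices of any minimal circuit is an $(n-1)$-simplex; hence $\omega^\ell$ is an $(n-1)$-dimensional subset of $\Delta_{n,k}^{\stab(\nkfloor-1)}$.

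First I would record the elementary fact that $r$-stability is monotone in $r$: a $k$-subset of $[n]$ that is $r$-stable is automatically $r'$-stable for every $r'\le r$, since decreasing $r$ only weakens the separation requirement. Equivalently, this is the nesting $\rstablenk\supseteq\Delta_{n,k}^{\stab(\nkfloor-1)}$ recorded in Section~\ref{facets}. In particular, for every $r<\nkfloor$ the full-dimensional simplex $\omega^\ell$ constructed above lies inside $\rstablenk$, so $\dim\rstablenk\ge n-1$.

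For the reverse inequality I would simply observe that $\rstablenk\subseteq\hypernk\subseteq H$, where $H$ is the affine hyperplane $\sum_{i=1}^n x_i=k$ in $\R^n$, which has dimension $n-1$. Combining the two bounds yields $\dim\rstablenk=n-1$, as claimed.

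I do not expect a genuine obstacle here: essentially all of the work is already contained in Lemma~\ref{hlpreserved}, and the only point needing a moment's attention is that the simplex built there for the extremal parameter $r=\nkfloor-1$ may be reused for every smaller $r$ — which is precisely the monotonicity of stability (equivalently, the nesting of the polytopes). An alternative route, should one prefer not to quote the lemma, would be to exhibit $n$ affinely independent $r$-stable $k$-subsets of $[n]$ by hand, but the circuit $\omega^\ell$ already packages exactly such a collection.
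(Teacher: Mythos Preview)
Your proposal is correct and is essentially identical to the paper's own argument: the paper states that the theorem ``follows immediately from the construction of the $(n-1)$-simplex $\omega^\ell$ in the proof of Lemma~\ref{hlpreserved},'' and your write-up simply unpacks that sentence, invoking the nesting of the $r$-stable hypersimplices to pass from $r=\nkfloor-1$ to smaller $r$ and the containment in $H$ for the upper bound.
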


\begin{lem}
\label{hlrnotpreserved}
Suppose $r>1$ and $\rstablenk$ is $(n-1)$-dimensional.  
Then $\hlrminus$ is not facet-defining for $\rstablenk$.
\end{lem}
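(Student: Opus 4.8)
The plan is to show that the face $\rstablenk\cap\hlrminus$ is too low-dimensional to be a facet. Recall that $\hlrminus$ consists of the points of $H$ with $\sum_{i=\ell}^{\ell+r-2}x_i=1$; write $W:=\{\ell,\ell+1,\ldots,\ell+r-2\}$ (indices modulo $n$) for this arc of $r-1$ consecutive positions, which is nonempty since $r>1$. Because $k\geq2$ we have $r\leq\nkfloor\leq n/2$, so $W$ is a genuine short arc of the $n$-gon; in particular any $r$-stable set meets $W$ in at most one point, so $\sum_{i\in W}x_i\leq1$ is valid on $\rstablenk$ and $\rstablenk\cap\hlrminus$ is a (possibly empty) face of $\rstablenk$ whose vertices are exactly the $\epsilon_I$ with $I$ an $r$-stable $k$-subset satisfying $|I\cap W|=1$.

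The key point I would isolate is that $r$-stability forces such an $I$ to avoid the two positions flanking $W$: if $I$ is $r$-stable with $I\cap W=\{j\}$, then $\ell-1\notin I$ and $\ell+r-1\notin I$. Indeed $j\in W$ gives $1\leq j-(\ell-1)\leq r-1$ and $1\leq(\ell+r-1)-j\leq r-1$, and since $r-1<n/2$ each displayed difference equals the cyclic distance between the two positions, which is strictly less than $r$; this would contradict $r$-stability if $\ell-1$ or $\ell+r-1$ belonged to $I$. Consequently every vertex of $\rstablenk$ lying in $\hlrminus$ satisfies the four linear equations $\sum_{i=1}^nx_i=k$, $\sum_{i\in W}x_i=1$, $x_{\ell-1}=0$, and $x_{\ell+r-1}=0$, and hence so does the whole face $\rstablenk\cap\hlrminus$.

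It then remains to verify that these four equations are linearly independent. The positions $\ell-1,\ell,\ldots,\ell+r-1$ are $r+1$ distinct residues modulo $n$ --- here one uses $r+1\leq n-1$, again a consequence of $k\geq2$ and $r\leq\nkfloor$ --- so $\ell-1$ and $\ell+r-1$ lie outside $W$ and there is still a coordinate of $[n]$ outside $W\cup\{\ell-1,\ell+r-1\}$; comparing the coefficients of the four equations on such a coordinate, then on $\ell-1$, on $\ell+r-1$, and on an element of $W$, eliminates any nontrivial relation. Thus $\rstablenk\cap\hlrminus$ is contained in an affine subspace of dimension at most $n-4$, whereas every facet of the $(n-1)$-dimensional polytope $\rstablenk$ has dimension $n-2>n-4$; therefore $\hlrminus$ is not facet-defining. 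The only delicate part is the cyclic-distance bookkeeping that guarantees $W$ behaves as an arc and that the flanking positions are distinct from each other and from $W$, but this follows directly from $r-1<n/2$ and $r+1<n$, so I do not expect a serious obstacle.
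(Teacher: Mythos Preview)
Your argument is correct and takes a genuinely different route from the paper's. The paper proceeds by contradiction through the circuit triangulation: assuming $\hlrminus$ supports a facet, it picks a maximal simplex $\omega\in\max\trinkr$ with that facet, locates the unique vertex $\epsilon^\star$ of $(\omega)$ on which $\sum_{i=\ell}^{\ell+r-2}x_i=0$, and then observes that the circuit moves immediately before and after $\epsilon^\star$ force the neighbouring vertices to fail $r$-stability. Your proof instead bounds the dimension of the face $\rstablenk\cap\hlrminus$ directly: the key step is that any $r$-stable $k$-set meeting the arc $W$ must avoid both flanking positions $\ell-1$ and $\ell+r-1$, yielding two extra independent linear constraints beyond $\sum x_i=k$ and $\sum_{i\in W}x_i=1$, so the face sits in an affine space of dimension at most $n-4$. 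This is more elementary and entirely self-contained --- it makes no appeal to $\trink$ or to the adjacency structure of minimal circuits --- whereas the paper's argument is tailored to the triangulation machinery already in play and reinforces the narrative that facet data can be read off from circuit combinatorics. The only places requiring care in your version are the cyclic-distance checks ensuring $W$, $\ell-1$, and $\ell+r-1$ are genuinely distinct and that the short-arc distances are the true cyclic distances; you have handled these using $r\le\lfloor n/k\rfloor\le n/2$ (valid since $1<k<n-1$ in this section), and they go through.
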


\begin{proof}
Suppose for the sake of contradiction that $\hlrminus$ is facet-defining for $\rstablenk$.  
Since $\rstablenk$ is $(n-1)$-dimensional then there exists an $(n-1)$-simplex $\omega\in\max\trinkr$ such that $\hlrminus$ is facet-defining for $\omega$.
In other words, every vertex in $(\omega)$ satisfies $\sum_{i=\ell}^{\ell+r-2}x_i=1$
except for exactly one vertex, say $\epsilon^\star$.  
Since all vertices in $(\omega)$ are $(0,1)$-vectors, this means all vertices other than $\epsilon^\star$ have exactly coordinate in the subvector $(\epsilon_{\ell},\epsilon_{\ell+1},\ldots,\epsilon_{\ell+r-2})$ being $1$ and all other coordinates are $0$.  
Similarly, this subvector is the $0$-vector for $\epsilon^\star$.  
Since $(\omega)$ is a minimal circuit this means that the move preceding the vertex $\epsilon^\star$ in $(\omega)$ results in the only 1 in $(\epsilon_{\ell},\epsilon_{\ell+1},\ldots,\epsilon_{\ell+r-2})$ exiting the subvector to the right.  
Similarly, the move following the vertex $\epsilon^\star$ in $(\omega)$ results in a single $1$ entering the subvector on the left.  
Suppose that 
	$$\epsilon^\star=(\ldots,\epsilon^\star_{\ell-1},\epsilon^\star_{\ell},\epsilon^\star_{\ell+1},\ldots,\epsilon^\star_{\ell+r-2},\epsilon^\star_{\ell+r-1},\ldots)=(\ldots,1,0,0,\ldots,0,1,\ldots).$$
Then this situation looks like

\begin{center}
\includegraphics[width=0.9\textwidth]{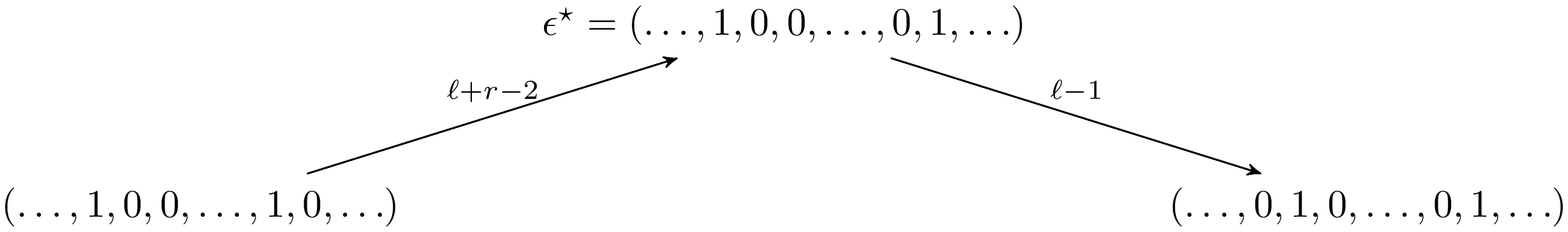}
\end{center}

Hence, neither the vertex preceding or following the vertex $\epsilon^\star$ is $r$-stable.  
For example, in the vertex following $\epsilon^\star$ there is a $1$ in entries $\ell$ and $\ell+r-1$.  
This contradicts the fact that $\omega\in\max\trinkr$.  
\end{proof}

To see why Lemma~\ref{hlrnotpreserved} will be useful suppose that Theorem~\ref{main theorem} holds for $\rstablenkminus$ for some $1\leq r<\left\lfloor\frac{n}{k}\right\rfloor$.
Then Lemmas~\ref{hlpreserved} and \ref{hlrnotpreserved} tell us that the collection of facet-defining $(n-2)$-flats for $\rstablenkminus$ that are also facet-defining for $\rstablenk$ is $\{\hl:\ell\in[n]\}$.  
This is the nature of the induction argument mentioned in Remark~\ref{facet proof outline}.
To identify the facet-defining $(n-2)$-flats of $\rstablenk$ that intersect $\relint\rstablenkminus$ we will use the following definition.  
\begin{defn}
\label{rsupportingpair}
Suppose $u$ and $\omega$ are a pair of simplices in $\max\trink$ satisfying
	\begin{itemize}
		\item $u\in\max\trinkr$, 
		\item $\omega\in\max\trinkrminus\backslash\max\trinkr$, and
		\item $\omega$ uses exactly one vertex that is not $r$-stable, called the \emph{key vertex}, and this is the only vertex by which $u$ and $\omega$ differ.
	\end{itemize}
We say that the ordered pair of simplices $(u,\omega)$ is an \emph{$r$-supporting pair} of $H_{u,\omega}$, where $H_{u,\omega}$ is the flat 
spanned by the common vertices of $u$ and $\omega$.
\end{defn}

\begin{lem}
\label{newhyperplaneshavepairs}
Suppose $1<r<\nkfloor$.  
Suppose also that $H_F$ is a $(n-2)$-flat defining a facet $F$ of $\rstablenk$ such that $H_F\cap\relint\rstablenkminus\neq\emptyset$.  
Then $H_F=H_{u,\omega}$ for some $r$-supporting pair of simplices $(u,\omega)$.
\end{lem}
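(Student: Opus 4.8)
The plan is to argue that any facet-defining $(n-2)$-flat $H_F$ of $\rstablenk$ which meets $\relint\rstablenkminus$ must arise as $H_{u,\omega}$ for an $r$-supporting pair, by tracing such a flat through the triangulation $\trinkr$. Since $\rstablenk$ is $(n-1)$-dimensional (Theorem~\ref{dimension theorem}) and $\trinkr$ triangulates it, the facet $F$ of $\rstablenk$ is covered by facets of the maximal simplices $\omega\in\max\trinkr$ that lie on $H_F$; in particular there is at least one $u\in\max\trinkr$ whose facet $G$ lies in $H_F$, so $H_F=H_{u,\omega}$ where $\omega$ is the simplex in $\trink$ on the other side of $G$ from $u$ (the simplex $\omega$ exists and is unique in $\max\trink$ because $G$ is an interior facet of the ambient triangulation $\trink$ of $\hypernk$ — here we use that $\rstablenk$ is full-dimensional inside $\hypernk$ and $F$ lies in the interior of $\hypernk$ since every vertex of $\rstablenk$ satisfies $0<\sum_{i=\ell}^{\ell+r-2}x_i<1$ type conditions, or more simply that $H_F\cap\relint\rstablenkminus\neq\emptyset$ forces $G$ not to lie on the boundary of $\hypernk$). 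It then remains to check that this $\omega$ satisfies the three bullets of Definition~\ref{rsupportingpair}.

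First I would establish $\omega\notin\max\trinkr$: if $\omega$ were in $\max\trinkr$, then $G=H_F\cap\rstablenk$ would be an interior facet of the triangulation $\trinkr$ of the full-dimensional polytope $\rstablenk$ (having maximal simplices on both sides), contradicting that $G$ lies on the boundary facet $F$. Next, since $\omega$ and $u$ share the facet $G$, they differ in exactly one vertex; call the vertex of $\omega$ not in $u$ the key vertex $v$. Because $u\in\max\trinkr$ all of its vertices are $r$-stable, and because $\omega\notin\max\trinkr$ at least one vertex of $\omega$ is not $r$-stable; as the only vertex of $\omega$ outside $u$ is $v$, the vertex $v$ must be the non-$r$-stable one and it is the unique such vertex of $\omega$. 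Finally I must show $\omega\in\max\trinkrminus$, i.e. every vertex of $\omega$ is $(r-1)$-stable: the vertices shared with $u$ are $r$-stable, hence $(r-1)$-stable, so it suffices to check the key vertex $v$ is $(r-1)$-stable. This is where the hypothesis $H_F\cap\relint\rstablenkminus\neq\emptyset$ is used: since (by the inductive form of Theorem~\ref{main theorem} applied to $\rstablenkminus$, together with Lemmas~\ref{hlpreserved} and \ref{hlrnotpreserved}) the flat $H_F$ is not one of the boundary flats $\hl$ or $\hlrminus$ of $\rstablenkminus$, the facet $G$ lies in $\relint\rstablenkminus$, so all $n$ vertices of $u$ and the key vertex $v$ all lie in $\overline{\rstablenkminus}$; in particular $v$ is a vertex of $\rstablenkminus$, hence $(r-1)$-stable.

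The main obstacle I anticipate is the last step — verifying that the key vertex $v$ is genuinely $(r-1)$-stable rather than merely lying in the closed polytope $\rstablenkminus$ in some degenerate way. The subtlety is that a lattice point of $\hypernk$ lying in $\rstablenkminus$ is automatically the characteristic vector of an $(r-1)$-stable set (since $\rstablenkminus$ is the convex hull of exactly those characteristic vectors and is a lattice polytope, so its only lattice points are its vertices — this follows because $\rstablenkminus$ is a subpolytope of $\hypernk$ whose lattice points are all $\{0,1\}$-vectors with coordinate sum $k$, and $(r-1)$-stability is a condition visible from the coordinates). Once one knows $v$ is a lattice point of $\hypernk$ in the region cut out by the facet inequalities of $\rstablenkminus$, $(r-1)$-stability of $v$ is forced. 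I would make this precise by first showing $v\in H_F\cap\rstablenkminus$: the simplex $\omega$ has its shared facet $G\subset\relint\rstablenkminus$, and $v$ is a vertex of the ambient triangulation $\trink$, so $v$ is a $\{0,1\}$-vector; since $\omega\in\max\trinkrminus$ would follow once $v$ is $(r-1)$-stable, the only thing to rule out is $v$ failing $(r-1)$-stability, which would put $v$ strictly outside every neighborhood of $\rstablenkminus$ on the relevant side — but then $G$, being a facet of $\omega$ opposite $v$, could not meet $\relint\rstablenkminus$, a contradiction. Packaging this dichotomy cleanly is the part that will require the most care.
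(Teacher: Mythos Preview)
Your overall strategy---find $u\in\max\trinkr$ with a facet $G\subset H_F$, let $\omega$ be the simplex of $\trink$ adjacent across $G$, then verify the three bullets of Definition~\ref{rsupportingpair}---is close to the paper's, but your argument for the crucial last step (that the key vertex $v$ is $(r-1)$-stable, equivalently $\omega\in\max\trinkrminus$) has a real gap. Your proposed contradiction ``if $v$ fails $(r-1)$-stability then $v$ lies strictly outside $\rstablenkminus$, so $G$, being the facet of $\omega$ opposite $v$, could not meet $\relint\rstablenkminus$'' is not a valid implication: a simplex can certainly have one vertex outside a convex body while the opposite facet sits well inside the interior. Nothing about the position of $v$ alone forces $G$ out of $\relint\rstablenkminus$. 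The earlier line ``the facet $G$ lies in $\relint\rstablenkminus$, so \ldots\ the key vertex $v$ \ldots\ lie[s] in $\overline{\rstablenkminus}$'' has the same problem: containment of $G$ says nothing about $v$.

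The clean way to close this---and this is exactly what the paper does---is to work inside the triangulation $\trinkrminus$ rather than reasoning about $v$ directly. Since $H_F\cap\relint\rstablenkminus\neq\emptyset$, the flat $H_F$ is not a supporting hyperplane of $\rstablenkminus$, so the $(n-2)$-simplex $G$ (which spans $H_F$) cannot lie in $\partial\rstablenkminus$; hence $G$ is an \emph{interior} codimension-one face of $\trinkrminus$ and is therefore shared by exactly two maximal simplices of $\trinkrminus$. One of them is $u$; the other, by uniqueness of the neighbor in the ambient $\trink$, must be $\omega$. This gives $\omega\in\max\trinkrminus$ immediately, with no appeal to where $v$ sits. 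Note also that this argument uses only the hypothesis $H_F\cap\relint\rstablenkminus\neq\emptyset$; your detour through ``the inductive form of Theorem~\ref{main theorem}'' to name the boundary flats of $\rstablenkminus$ is unnecessary (and makes the lemma depend on a statement it is meant to help prove).
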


\begin{proof}
Since $H_F\cap\relint\rstablenkminus\neq\emptyset$ and $\rstablenk$ is contained in $\rstablenkminus$ then \\
$F\cap\relint\rstablenkminus\neq\emptyset$.  
That is, there exists some $\alpha\in F$ such that  $\alpha\in\relint\rstablenkminus$.  
Recall that $\trinkrminus$ is a triangulation of $\rstablenkminus$ that restricts to a triangulation $\trinkr$ of $\rstablenk$.  
It follows that $\trinkr$ and $\trinkrminus\backslash\trinkr$ give identical triangulations of $\partial\rstablenk\cap\relint\rstablenkminus$.    
Since $\rstablenk$ is $(n-1)$-dimensional we may assume, without loss of generality, that $\alpha$ lies in the relative interior of an $(n-2)$-dimensional simplex in the triangulation of $\partial\rstablenk\cap\relint\rstablenkminus$ induced by $\trinkr$ and $\trinkrminus\backslash\trinkr$.
Therefore, there exists some $u\in\max\trinkr$ such that $H_F$ is facet-defining for $u$ and $\alpha\in u\cap H_F$, and there exists some $\omega\in\max\trinkrminus\backslash\max\trinkr$ such that $\alpha\in\omega\cap H_F$.  
Since $\trinkrminus$ is a triangulation of $\rstablenkminus$ it follows that $u\cap H_F=\omega\cap H_F$.
Hence, $u$ and $\omega$ are adjacent simplices that share the facet-defining $(n-2)$-flat $H_F$, and they form an $r$-supporting pair $(u,\omega)$ with $H_{u,\omega}=H_F$.
\end{proof}

It will be helpful to understand the key vertex of an $r$-supporting pair $(u,\omega)$.  
To do so, we will use the following definition.

\begin{defn}
\label{rstablepair}
Let $\epsilon\in\R^n$ be a vertex of $\hypernk$.  
A \emph{pair} of 1's in $\epsilon$ is an ordered pair of two coordinates of $\epsilon$, $(i,j)$, such that $\epsilon_i=\epsilon_j=1$, and $\epsilon_t=0$ for all $i<t<j$ (modulo $n$).  
A pair of 1's is called an \emph{$r$-stable pair} if there are at least $r-1$ 0's separating the two 1's.
\end{defn}

\begin{lem}
\label{hyperplanes for r-supporting pairs}
Suppose $(u,\omega)$ is an $r$-supporting pair, and let $\epsilon$ be the key vertex of this pair.  
Then $\epsilon$ contains precisely one $(r-1)$-stable but not $r$-stable pair, $(\ell,\ell+r-1)$.  
Moreover, $H_{u,\omega}=\hlr$.  
\end{lem}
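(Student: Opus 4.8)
The plan is to analyze the key vertex $\epsilon$ directly from the defining properties of an $r$-supporting pair $(u,\omega)$, then pin down the flat $H_{u,\omega}$ by examining how the circuit move of $u$ into $\omega$ (the move replacing the key vertex of $\omega$) interacts with the window of coordinates $(\ell,\ell+1,\ldots,\ell+r-1)$. First I would record what Definition~\ref{rsupportingpair} gives us: $\omega\in\max\trinkrminus\backslash\max\trinkr$, so $\omega$ has a vertex that is not $r$-stable but all its vertices are $(r-1)$-stable; since $\omega$ is a minimal circuit and vertices are obtained one right-move at a time, any single vertex can fail $r$-stability at only limited places, and I would argue it fails at exactly one pair. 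The cleanest way: a vertex $\epsilon$ that is $(r-1)$-stable but not $r$-stable has at least one pair of consecutive $1$'s separated by exactly $r-2$ zeros (so ``distance $r-1$'' around the $n$-gon). Call such a pair a bad pair. I must show $\epsilon$ has exactly one bad pair — this is where I expect the main work to lie.

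To establish uniqueness of the bad pair, I would use the fact that $u$ is obtained from $\omega$ by replacing the key vertex $\epsilon$ with a single other vertex $\epsilon'$, and both $u$ and $\omega$ are maximal simplices of the circuit triangulation, hence correspond to minimal circuits in $G_{n,k}$ sharing all vertices but one. The two circuits $(u)$ and $(\omega)$ therefore agree except that at one step, where $(\omega)$ passes through $\epsilon$, the circuit $(u)$ passes through $\epsilon'$; thus $\epsilon$ and $\epsilon'$ are joined in the $n$-gon picture by ``the same two moves reversed,'' meaning $\epsilon$ is obtained from $\epsilon'$ by moving some $1$ from position $p+1$ back to $p$ and $\epsilon'$ is obtained from $\epsilon$ by moving that $1$ from $p$ to $p+1$, while all neighbors in the circuit are common. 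Since every vertex of $u$ — in particular $\epsilon'$ and the two circuit-neighbors of $\epsilon$ — is $r$-stable, the only way $\epsilon$ can fail $r$-stability is by having a single $1$ sitting ``one slot too close'' to its clockwise neighbor, and moving that $1$ one step either forward (to $\epsilon'$) or backward (to the preceding circuit vertex) must restore $r$-stability. This forces the bad configuration to be a single $1$ at some position $j$ with the previous $1$ at position $j-(r-1)$ (so $r-2$ zeros between), and no other pair can be bad because the other circuit-neighbor vertices, which differ from $\epsilon$ only in one nearby move, are all $r$-stable. I would spell out the two cases (the offending $1$ is the right endpoint of the bad pair and gets pushed right, versus it is the left endpoint and the preceding move pulled it in) and check that in each case exactly one pair is at distance $r-1$; writing $\ell$ for the left endpoint of that pair gives the bad pair $(\ell,\ell+r-1)$, with the $r-1$ coordinates $x_\ell,\ldots,x_{\ell+r-1}$ summing to $2$ in $\epsilon$ but to $\leq 1$ in every other vertex of $\omega$.

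It then remains to identify $H_{u,\omega}=\hlr$. By definition $H_{u,\omega}$ is spanned by the $n-1$ common vertices of $u$ and $\omega$, i.e.\ all vertices of $\omega$ other than $\epsilon$ together with $\epsilon'$. Each such common vertex is $r$-stable; I would show each of them lies on $\hlr$, i.e.\ satisfies $\sum_{i=\ell}^{\ell+r-1}x_i=1$. Membership in $\hlrhalf$ (the inequality $\leq 1$) is automatic for $r$-stable vertices by the same reasoning as in Lemma~\ref{hlrnotpreserved}: an $r$-stable $(0,1)$-vector can have at most one $1$ in any window of $r$ consecutive coordinates. For the reverse inequality $\geq 1$ on the common vertices, I would use that $H_{u,\omega}$ is a facet-defining $(n-2)$-flat (given, since $(u,\omega)$ arises as in Lemma~\ref{newhyperplaneshavepairs}) and that $\rstablenk\subseteq\hlrhalf$ with $\hlrhalf\cap H$ an $(n-2)$-flat: the facet $F$ of $\rstablenk$ supported by $H_{u,\omega}$ cannot be contained in the relative interior of $\hlrhalf$, so it must lie on its boundary $\hlr$; alternatively, since $\epsilon$ has $\sum_{i=\ell}^{\ell+r-1}x_i=2>1$ and the common vertices are the ones ``between'' $\epsilon$ and the interior, affine-independence of the $n-1$ common vertices forces their common affine hull to be exactly $\hlr$ once we know they all satisfy the equation — and the key-vertex move analysis shows the relevant $1$ enters/leaves the window, so $\epsilon'$ and the two neighbors of $\epsilon$ each carry exactly one $1$ in the window $\{\ell,\ldots,\ell+r-1\}$, and propagating along the circuit shows every other vertex of $\omega$ does too. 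Comparing the unique defining equation of the $(n-2)$-flat spanned by these $n-1$ affinely independent points with $\sum_{i=\ell}^{\ell+r-1}x_i=1$ (both hold, and an $(n-2)$-flat in $H$ has a unique such equation up to scaling) yields $H_{u,\omega}=\hlr$. The main obstacle, as noted, is the combinatorial bookkeeping showing the key vertex has \emph{exactly} one bad pair and that it is precisely a distance-$(r-1)$ pair; once that is in hand, the identification of the flat is a short affine-geometry argument.
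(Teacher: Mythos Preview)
Your overall strategy---analyze the circuit structure around the key vertex $\epsilon$ to isolate a single bad pair, then read off the supporting flat---is the same as the paper's. But two concrete missteps would prevent your write-up from going through cleanly.

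First, your description of how $\epsilon$ and $\epsilon'$ are related is wrong. Adjacent maximal simplices in $\max\trink$ correspond to swapping two consecutive edge-labels in the minimal circuit, so the common circuit looks like $\cdots\to v_a\to\epsilon\to v_b\to\cdots$ in $(\omega)$ and $\cdots\to v_a\to\epsilon'\to v_b\to\cdots$ in $(u)$. Thus $\epsilon$ and $\epsilon'$ are each one move from $v_a$ and one move from $v_b$, but they differ from \emph{each other} in two separate moves (four coordinates), not one. So ``moving that $1$ from $p$ to $p+1$'' does not take $\epsilon$ to $\epsilon'$. The good news is that you do not need $\epsilon'$ or $u$ at all. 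The paper works entirely inside $(\omega)$: the vertex following $\epsilon$ in $(\omega)$ is $r$-stable and differs from $\epsilon$ by one rightward move of a single $1$; that single move can affect exactly two pairs (lengthening the pair where the moved $1$ is the right endpoint, shortening the pair where it is the left endpoint). Hence exactly one pair of $\epsilon$ is $(r-1)$-stable but not $r$-stable, namely $(\ell,\ell+r-1)$, and the neighboring pair on the other side must have been $(r+1)$-stable. One neighbor suffices; there is no need to juggle forward/backward moves or bring in $\epsilon'$.

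Second, your first alternative for identifying $H_{u,\omega}$---``use that $H_{u,\omega}$ is a facet-defining $(n-2)$-flat (given, since $(u,\omega)$ arises as in Lemma~\ref{newhyperplaneshavepairs})''---is not available: the present lemma assumes only that $(u,\omega)$ is an $r$-supporting pair in the sense of Definition~\ref{rsupportingpair}, not that it arose from a facet of $\rstablenk$. The argument you do want is your ``propagating along the circuit'' idea, which is exactly what the paper (tersely) uses: in a minimal circuit every edge-label appears once, and the only labels that change $\sum_{i=\ell}^{\ell+r-1}x_i$ are $\ell-1$ (entry into the window, $+1$) and $\ell+r-1$ (exit from the window, $-1$). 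The analysis above forces these two edges to be precisely the edges incident to $\epsilon$, so the window sum equals $1$ at every other vertex of $(\omega)$; those $n-1$ vertices are affinely independent and span $\hlr$, giving $H_{u,\omega}=\hlr$.
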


\begin{proof}
We first show that $\epsilon$ has precisely one $(r-1)$-stable but not $r$-stable pair, $(\ell,\ell+r-1)$.
To see this, consider the minimal circuit $(\omega)$ in the graph $G_{n,k}$ associated with the simplex $\omega$.  Think of the key vertex $\epsilon$ as the initial vertex of this circuit, and recall that each edge of the circuit corresponds to a move of exactly one 1 to the right by exactly one entry.  
Hence, in the circuit $(\omega)$ the vertex following $\epsilon$ differs from $\epsilon$ by a single right move of a single 1.  
Since $\epsilon$ is the only vertex in $(\omega)$ that is $(r-1)$-stable but not $r$-stable then the move of this single 1 to the right by one entry must eliminate all pairs that are $(r-1)$-stable but not $r$-stable.  
Moreover, this move cannot introduce any new $(r-1)$-stable but not $r$-stable pairs.  
Since a single 1 can be in at most two pairs, and this 1 must move exactly one entry to the right, then this 1 must be in entry $j$ in the pairs $(i,j)$ and $(j,t)$ where $(i,j)$ is $(r-1)$-stable but not $r$-stable, and $(j,t)$ is $(r+1)$-stable.  
Moreover, since the move of the 1 in entry $j$ can only change the stability of the pairs $(i,j)$ and $(j,t)$ then it must be that all other pairs are $r$-stable.

Finally, since $\omega$ has the unique $(r-1)$-stable but not $r$-stable vertex $\epsilon$, and since $\epsilon$ has the unique $(r-1)$-stable but not $r$-stable pair $(\ell,\ell+r-1)$ then all other vertices in $\omega$ satisfy $\sum_{i=\ell}^{\ell+r-1}x_i=1$.  
Hence, $H_{u,\omega}=\hlr$.  
\end{proof}

\begin{lem}\label{thenewhyperplanes}
Suppose $1<r<\nkfloor$.  
Suppose also that $H_F$ is an $(n-2)$-flat defining a facet $F$ of $\rstablenk$ and $H_F\cap\relint\rstablenkminus\neq\emptyset$.  
Then $H_F=\hlr$ for some $\ell\in[n]$.  
\end{lem}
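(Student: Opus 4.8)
The plan is to obtain this lemma as an essentially immediate consequence of the two preceding results, Lemma~\ref{newhyperplaneshavepairs} and Lemma~\ref{hyperplanes for r-supporting pairs}, which together reduce the statement to a short bookkeeping argument. First I would apply Lemma~\ref{newhyperplaneshavepairs} directly to $H_F$: the hypotheses here are exactly those of that lemma, namely $1<r<\nkfloor$ and $H_F$ is an $(n-2)$-flat defining a facet $F$ of $\rstablenk$ with $H_F\cap\relint\rstablenkminus\neq\emptyset$. (Note that $\rstablenk$ is $(n-1)$-dimensional for $r<\nkfloor$ by Theorem~\ref{dimension theorem}, so ``facet'' has its expected meaning and the lemma applies.) Hence there exists an $r$-supporting pair of simplices $(u,\omega)$ in the sense of Definition~\ref{rsupportingpair} with $H_F=H_{u,\omega}$.

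Next I would feed this $r$-supporting pair $(u,\omega)$ into Lemma~\ref{hyperplanes for r-supporting pairs}. Letting $\epsilon$ denote the key vertex of the pair, that lemma guarantees that $\epsilon$ contains precisely one pair of $1$'s that is $(r-1)$-stable but not $r$-stable, say $(\ell,\ell+r-1)$ with $\ell\in[n]$, and that the flat spanned by the common vertices of $u$ and $\omega$ satisfies $H_{u,\omega}=\hlr$. Chaining the two equalities yields $H_F=H_{u,\omega}=\hlr$, which is exactly the claim.

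Since the substantive geometric content has already been packaged into Lemmas~\ref{newhyperplaneshavepairs} and~\ref{hyperplanes for r-supporting pairs}, I do not expect a genuine obstacle in this particular lemma; the only care needed is matching hypotheses across the three statements (the dimension assumption, and the restriction $r>1$ which is what makes a ``$(r-1)$-stable but not $r$-stable'' pair a meaningful notion). The real work lies upstream, in proving Lemma~\ref{newhyperplaneshavepairs} (that a new facet flat must come from an $r$-supporting pair, via the fact that $\trinkr$ and $\trinkrminus\setminus\trinkr$ induce the same triangulation of $\partial\rstablenk\cap\relint\rstablenkminus$) and Lemma~\ref{hyperplanes for r-supporting pairs} (the circuit-move analysis of the key vertex). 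With this lemma in hand, the inductive proof of Theorem~\ref{main theorem} closes: assuming the theorem for $\rstablenkminus$, Lemmas~\ref{hlpreserved} and~\ref{hlrnotpreserved} identify which old facet flats survive, and Lemma~\ref{thenewhyperplanes} shows every new facet flat is some $\hlr$, so the complete list of facet-defining $(n-2)$-flats of $\rstablenk$ is $\{\hl:\ell\in[n]\}\cup\{\hlr:\ell\in[n]\}$.
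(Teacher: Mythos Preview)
Your proposal is correct and matches the paper's own proof essentially line for line: apply Lemma~\ref{newhyperplaneshavepairs} to obtain an $r$-supporting pair $(u,\omega)$ with $H_F=H_{u,\omega}$, then apply Lemma~\ref{hyperplanes for r-supporting pairs} to conclude $H_{u,\omega}=\hlr$ for some $\ell\in[n]$. The additional contextual remarks you include are accurate but not part of the paper's (very brief) proof of this particular lemma.
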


\begin{proof}
By Lemma~\ref{newhyperplaneshavepairs} the $H_F=H_{u,\omega}$ for some $r$-supporting pair $(u,\omega)$.  
By Lemma~\ref{hyperplanes for r-supporting pairs} $\omega$ has a unique vertex that is $(r-1)$-stable but not $r$-stable with a unique $(r-1)$-stable but not $r$-stable pair $(\ell, \ell+r-1)$ for some $\ell\in[n]$.  
Thus, $H_F=H_{u,\omega}=\hlr$.
\end{proof}

We now show that $\hlr$ is indeed facet-defining for $\rstablenk$ for all $\ell\in[n]$.

\begin{lem}\label{allthenewhyperplanes}
Suppose $1\leq r<\left\lfloor\frac{n}{k}\right\rfloor$ or $n=kr+1$.  
Then $\hlr$ is facet-defining for $\rstablenk$ for all $\ell\in[n]$.
\end{lem}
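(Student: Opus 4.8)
The plan is to exhibit, for each fixed $\ell\in[n]$, a single simplex $\omega\in\max\trinkr$ for which $\hlr$ is facet-supporting; since $\rstablenk$ is $(n-1)$-dimensional by Theorem~\ref{dimension theorem} (and directly by the circuit construction in the $n=kr+1$ case), producing $n-1$ affinely independent $r$-stable vertices lying on $\hlr$ suffices. As in the proof of Lemma~\ref{hlpreserved}, the cleanest way to get $n-1$ affinely independent vertices on a single $(n-2)$-flat is to find a minimal circuit $(\omega)$ in $G_{n,k}$ all of whose vertices are $r$-stable and all but exactly one of which satisfy $\sum_{i=\ell}^{\ell+r-1}x_i=1$ with equality, the remaining one satisfying the strict inequality $\sum_{i=\ell}^{\ell+r-1}x_i=0$. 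The vertices of an $(n-1)$-simplex are affinely independent, so the $n-1$ vertices of $\omega$ on $\hlr$ do the job.

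First I would treat $r=1$ separately: here $\hlr=\hl[1]$ is $\{x_\ell=1\}\cap H$, which is well-known to be facet-defining for $\hypernk=\Delta_{n,k}^{\stab(1)}$ (assuming $k<n-1$, and the edge case $n=k+1$ is covered by the hypothesis $n=kr+1$), so there is nothing to prove. For $r>1$, I would build the circuit by hand starting from a carefully chosen initial $r$-stable vertex $\epsilon$ whose support meets the window $\{\ell,\ell+1,\ldots,\ell+r-1\}$ in exactly one point, located at position $\ell+r-1$, with the ``next'' $1$ to its right at distance exactly $r$ (so that the pair across the window is exactly $(r-1)$-stable but not $r$-stable at no intermediate vertex except possibly $\epsilon$ itself). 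One then moves the $1$ sitting in position $\ell+r-1$ one step to the right; now the window is empty, and one cycles the remaining $1$'s around the $n$-gon in the pattern used in Lemma~\ref{hlpreserved} (move each $1$ into the slot of the $1$ to its right, processed in order), being careful that during this entire sweep no $1$ ever enters the window $\{\ell,\ldots,\ell+r-1\}$ until the very last move, which returns the circuit to $\epsilon$. To make this work, the packing inequality $kr\le n$ (respectively $kr=n-1$ in the boundary case $n=kr+1$) must be used exactly as in Lemma~\ref{hlpreserved} to guarantee there is enough room for an $r$-stable configuration with the prescribed window behavior; the condition $k>1$ again gives the slack needed so that all intermediate vertices are genuinely $r$-stable.

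The routine part is then checking $r$-stability of every vertex along the circuit and confirming that exactly one vertex (namely $\epsilon$, or its translate depending on how we normalize the starting point) lies off $\hlr$ while all others lie on it; this is a direct inspection of the ``move a single $1$ one step right'' dynamics, entirely parallel to the verification at the end of Lemma~\ref{hlpreserved}. The main obstacle I anticipate is the bookkeeping in the boundary case $n=kr+1$: when $kr=n-1$ the $r$-stable vertices are maximally packed, leaving only one ``free'' zero, so the window must be positioned so that this single slack zero is what allows the initial move out of the window without destroying $r$-stability, and one must confirm that no later move in the sweep is forced to violate stability — in particular that the unique non-window vertex $\epsilon$ is itself $r$-stable despite the tight packing. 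Handling $1\le r<\nkfloor$ and $n=kr+1$ in one argument will require stating the circuit construction so that the only place the hypothesis enters is this counting of available zeros, after which both cases follow from the same pattern.
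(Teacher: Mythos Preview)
Your overall strategy is the same as the paper's: for each $\ell$, build an explicit minimal circuit in $G_{n,k}$ whose vertices are all $r$-stable and all but one lie on $\hlr$. The handling of $r=1$ and the identification of $n\ge kr+1$ as the packing inequality that makes room for the construction are both correct.

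The specific circuit you describe, however, is inverted. You place a $1$ at $\ell+r-1$, move it out so the window $\{\ell,\ldots,\ell+r-1\}$ becomes empty, and then sweep the remaining $1$'s ``being careful that during this entire sweep no $1$ ever enters the window \ldots\ until the very last move.'' That would produce a circuit with window sum $0$ at almost every vertex and window sum $1$ only at $\epsilon$, i.e.\ $n-1$ vertices \emph{off} $\hlr$ rather than on it---and your concluding sentence, that $\epsilon$ is the one vertex off $\hlr$, contradicts your own construction (in which $\epsilon$ has window sum $1$). In fact the scenario is not achievable: whichever $1$ eventually lands at $\ell+r-1$ must traverse the entire window, contributing $r$ consecutive vertices with window sum $1$. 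There is also a local problem: if the $1$ at $\ell+r-1$ has its right neighbour at distance exactly $r$, as you stipulate, then your very first move (shifting it to $\ell+r$) already destroys $r$-stability.

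What you actually need is the opposite configuration: the window should contain exactly one $1$ at every vertex of the circuit except one. The paper arranges this by placing $1_1$ at $\ell-1$ (just left of the window) and $1_s$ at $\ell+(s-1)r-1$ for $s=2,\ldots,k$, so that $1_2$ sits at $\ell+r-1$ and the long gap of $n-(k-1)r\ge r+1$ zeros lies to the right of $1_k$. The moves are then processed in the order $1_k,1_{k-1},\ldots,1_2,1_1$ in each of $r$ rounds. With this ordering, $1_1$ enters the window on the step immediately after $1_2$ exits, so the window is empty at exactly one vertex of the circuit; the $r$-stability of every intermediate vertex is then a direct check using $n\ge kr+1$.
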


\begin{proof}
First we note that the result is clearly true for $r=1$.
So in the following we assume $r>1$.
To prove the claim we show that $\hlr$ supports an $(n-1)$-simplex $\omega\in\max\trinkr$.

To this end, consider the characteristic vector of the $k$-subset 
\\ $\left\{(\ell-1)+(s-1)r : s\in[k]\right\}\subset[n].$
Denote this characteristic vector by $\epsilon^\ell$, and think of its indices modulo $n$.  
Labeling the $1$ in coordinate $(\ell-1)+(s-1)r$ of $\epsilon^\ell$ as $1_s$, it is quick to see that $1_s$ and $1_{s+1}$ are separated by $r-1$ zeros for every $s\in[k]$. 
That is, $\epsilon^\ell_i=0$ for every $(\ell-1)+(s-1)r<i<(\ell-1)+sr$ (modulo $n$), and there are precisely $r-1$ such coordinates.
Moreover, since $r<\nkfloor$ or $n=kr+1$ then $n\geq kr+1$.  
So there are at least $r$ zeros between $1_1$ and $1_k$.
Hence, this vertex is $r$-stable.
From $\epsilon^\ell$ we can now construct an $r$-stable circuit $(\omega^\ell)$ by moving the $1$'s in $\epsilon^\ell$ one coordinate to the right (modulo $n$), one at a time, in the following pattern:
	\begin{enumerate}[(1)]
		\item Move $1_k$. Then move $1_{k-1}$.  Then move $1_{k-2}$. $\ldots$ Then move $1_1$.  
		\item Repeat step (1) $r-1$ more times.
		\item Move $1_k$ to entry $\ell$.
	\end{enumerate}
Each move in this pattern produces a new $r$-stable vertex since there are always at least $r-1$ zeros between each pair of $1$'s.
So $\omega^\ell\in\max\trinkr$ and $\hlr$ supports $\omega^\ell$ since every vertex of $(\omega^\ell)$ lies in $\hlr$ except for the vertex preceding the first move of $1_1$ in the circuit $(\omega^\ell)$.  
\end{proof}

\begin{rem}
When $n=kr+1$ then $\omega^\ell =\rstablenk$ for all $\ell\in[n]$.
So the facet-defining inequalities for $\omega^\ell = \rstablenk$ are precisely $\hlrhalf$ for $\ell\in[n]$.  
\end{rem}
From Lemmas~\ref{thenewhyperplanes} and \ref{allthenewhyperplanes} we see that when $1<r<\nkfloor$ the facet-defining $(n-2)$-flats for $\rstablenk$ that intersect $\relint\rstablenkminus$ are precisely $\hlr$ for $\ell\in[n]$.  
We are now ready to prove Theorem~\ref{main theorem}.


\subsubsection{Proof of Theorem~\ref{main theorem}}
First recall that Theorem~\ref{main theorem} is known to be true for $r=1$.  
Now let $1<r<\left\lfloor\frac{n}{k}\right\rfloor$.  
By Theorem~\ref{dimension theorem} we know that $\rstablenk$ is $(n-1)$-dimensional.  
First let $r=2$.  
By Lemma~\ref{hlpreserved} we know that $\hl$ is facet-defining for $\twostablenk$ for all $\ell\in[n]$.  
By Lemma~\ref{hlrnotpreserved} we know that for every $\ell\in[n]$ $H_{\ell,1}$ is not facet-defining for $\twostablenk$.  
Thus, the collection of facet-defining $(n-2)$-flats for $\hypernk$ that are also facet-defining for $\twostablenk$ are $\{\hl:\ell\in[n]\}$, and all other facet-defining $(n-2)$-flats for $\twostablenk$ must intersect the relative interior of $\hypernk$.  
Therefore, by Lemmas~\ref{thenewhyperplanes} and \ref{allthenewhyperplanes} the remaining facet-defining $(n-2)$-flats for $\twostablenk$ are $H_{\ell,2}$ for $\ell\in[n]$.
Since $\rstablenk$ is contained in $\hlhalf$ and $\hlrhalf$, this proves the result for $r=2$.  
Theorem~\ref{main theorem} then follows by iterating this argument for $2<r<\left\lfloor\frac{n}{k}\right\rfloor$.


\section{Gorenstein $r$-stable Hypersimplices}
\label{gorenstein}

In \cite{braunsol}, the authors note that the $r$-stable hypersimplices appear to have unimodal Ehrhart $\delta$-vectors, and they verify this observation for a collection of these polytopes in the $k=2$ case.  
In \cite{bruns}, it is shown that a Gorenstein integer polytope with a regular unimodular triangulation has a unimodal $\delta$-vector.  
In \cite{braunsol}, it is shown that $\rstablenk$ has a regular unimodular triangulation.  
One application for the equations of the facets of a rational convex polytope is to determine whether or not the polytope is Gorenstein \cite{hibi6}.  
We now utilize Theorem~\ref{main theorem} to identify $1\leq r<\nkfloor$ for which $\rstablenk$ is Gorenstein.  
We identify a collection of such polytopes for every $k\geq2$, thereby expanding the collection of $r$-stable hypersimplices known to have unimodal $\delta$-vectors.
In this section we let $1<k<n-1$.
This is because $\Delta_{n,1}$ and $\Delta_{n,n-1}$ are simply copies of the standard $(n-1)$-simplex, which are well-known to be Gorenstein \cite[p.29]{beck}.

First we recall the definition of a Gorenstein polytope.  
Let $P\subset\R^N$ be a rational convex polytope of dimension $d$, and for an integer $q\geq1$ let $qP:=\{q\alpha:\alpha\in P\}$.  
Let $x_1,x_2,\ldots, x_N,$ and $z$ be indeterminates  over some field $K$.  
Given an integer $q\geq1$, let $A(P)_q$ denote the vector space over $K$ spanned by the monomials $x_1^{\alpha_1}x_2^{\alpha_2}\cdots x_N^{\alpha_N}z^q$ for $(\alpha_1,\alpha_2,\ldots,\alpha_N)\in qP\cap\ZZ^N$.  
Since $P$ is convex we have that $A(P)_pA(P)_q\subset A(P)_{p+q}$ for all $p$ and $q$.  
It then follows that the graded algebra
	$$A(P):=\bigoplus_{q=0}^\infty A(P)_q$$
is finitely generated over $K=A(P)_0$.  
We call $A(P)$ the \emph{Ehrhart Ring} of $P$, and we say that $P$ is \emph{Gorenstein} if $A(P)$ is Gorenstein.

We now recall the combinatorial criterion given in \cite{den} for an integral convex polytope $P$ to be Gorenstein.  
Let $\partial P$ denote the boundary of $P$ and let $\relint(P)=P-\partial P$.  
We say that $P$ is of \emph{standard type} if $d=N$ and the origin in $\R^d$ is contained in $\relint(P)$.  
When $P\subset \R^d$ is of standard type we define its polar set
	$$P^\star=\left\{(\alpha_1,\alpha_2,\ldots,\alpha_d)\in\R^d:\sum_{i=1}^d\alpha_i\beta_i\leq1\mbox{ for every $(\beta_1,\beta_2,\ldots,\beta_d)\in P$}\right\}.$$
The polar set $P^\star$ is again a convex polytope of standard type, and $(P^\star)^\star=P$.  We call $P^\star$ the \emph{dual polytope} of $P$.  
Suppose $(\alpha_1,\alpha_2,\ldots,\alpha_d)\in\R^d$, and $K$ is the hyperplane in $\R^d$ defined by the equation $\sum_{i=1}^d\alpha_ix_i=1$.
A well-known fact is that $(\alpha_1,\alpha_2,\ldots,\alpha_d)$ is a vertex of $P^\star$ if and only if $K\cap P$ is a facet of $P$.  
It follows that the dual polytope of a rational polytope is always rational.  
However, it need not be that the dual of an integral polytope is always integral.  
If $P$ is an integral polytope with integral dual we say that $P$ is \emph{reflexive}.  
This idea plays a key role in the following combinatorial characterization of Gorenstein polytopes.

\begin{thm}\cite[De Negri and Hibi]{den}\label{reflexivetheorem}
Let $P\subset\R^d$ be an integral polytope of dimension $d$, and let $q$ denote the smallest positive integer for which 
	$$q(\relint(P))\cap\ZZ^d\neq\emptyset.$$
Fix an integer point $\alpha\in q(\relint(P))\cap\ZZ^d$, and let $Q$ denote the integral polytope $q P-\alpha\subset\R^d$.  
Then the polytope $P$ is Gorenstein if and only if the polytope $Q$ is reflexive.
\end{thm}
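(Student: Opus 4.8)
The plan is to realize the Ehrhart ring $A(P)$ as a normal affine semigroup ring, extract the Gorenstein condition from its graded canonical module, and then match the resulting lattice-point identity with Hibi's lattice-point characterization of reflexivity.

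\emph{Step 1: the canonical module of $A(P)$.} First I would observe that $A(P)=K[S]$, where $S$ is the semigroup of all $(\gamma,m)\in\ZZ^{d+1}$ with $m\geq 0$ and $\gamma\in mP$; this is exactly the set of lattice points of the rational cone over $P\times\{1\}$, hence a saturated affine semigroup, so by Hochster's theorem $A(P)$ is a Cohen--Macaulay normal domain, positively graded with $A(P)_0=K$ and $A(P)_1\neq 0$ (here $P$ being a lattice polytope is used). By the Danilov--Stanley description of the canonical module of a normal semigroup ring, the graded canonical module $\omega=\omega_{A(P)}$ is the ideal of $A(P)$ spanned by the monomials $x^\gamma z^m$ with $m\geq 1$ and $\gamma\in m(\relint P)\cap\ZZ^d$; in particular its lowest nonzero degree is exactly the integer $q$ of the statement. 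Since $A(P)$ is a Cohen--Macaulay domain, $\omega$ is torsion-free of rank one, so $A(P)$ is Gorenstein if and only if $\omega$ is cyclic; a generator would then be forced into degree $q$, so $\dim_K\omega_q=1$. Hence $A(P)$ is Gorenstein if and only if
\[ q(\relint P)\cap\ZZ^d=\{\alpha\}\ \text{ and }\ m(\relint P)\cap\ZZ^d=\alpha+\big((m-q)P\cap\ZZ^d\big)\ \text{ for all }m\geq q. \]
The inclusion $\supseteq$ in the second condition is automatic, since $\alpha\in\relint(qP)$ gives $\alpha+\beta\in\relint(qP)+(m-q)P\subseteq\relint(mP)$ for every $\beta\in(m-q)P$; so that condition is equivalent to the equality of cardinalities $\#\big(m(\relint P)\cap\ZZ^d\big)=\#\big((m-q)P\cap\ZZ^d\big)$.

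\emph{Step 2: translating reflexivity of $Q$.} Next I would invoke Hibi's criterion: a $d$-dimensional lattice polytope $R\subset\R^d$ with $0\in\relint R$ is reflexive if and only if $(t-1)R\cap\ZZ^d=t(\relint R)\cap\ZZ^d$ for every $t\geq 1$, the inclusion $\subseteq$ again being automatic. Applying this to $R=Q=qP-\alpha$, which is a lattice polytope with $0\in\relint Q$ because $\alpha\in q(\relint P)=\relint(qP)$, and translating by the integer vectors $t\alpha$, reflexivity of $Q$ becomes the family of identities $tq(\relint P)\cap\ZZ^d=\alpha+\big((tq-q)P\cap\ZZ^d\big)$ for all $t\geq 1$. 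The case $t=1$ is precisely the first Gorenstein condition of Step 1, and the cases $t\geq 2$ are the second Gorenstein condition restricted to the degrees $m\in\{2q,3q,\dots\}$. In particular, if $A(P)$ is Gorenstein then $Q$ is reflexive.

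\emph{Step 3: the converse, and the main obstacle.} The remaining, and I expect hardest, step is to promote the identity from the arithmetic progression $\{q,2q,3q,\dots\}$ of degrees to \emph{all} $m\geq q$, since reflexivity of $Q$ controls only the dilates $tQ$, i.e.\ the degrees $m=tq$ of $A(P)$, whereas cyclicity of $\omega$ is a statement about every degree. Here I would use Ehrhart--Macdonald reciprocity: with $L_P$ the Ehrhart polynomial of $P$, the function $f(m):=(-1)^dL_P(-m)-L_P(m-q)$ is a single polynomial in $m$ which, for every integer $m\geq q$, equals $\#\big(m(\relint P)\cap\ZZ^d\big)-\#\big((m-q)P\cap\ZZ^d\big)$. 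Reflexivity of $Q$ forces $f(tq)=0$ for all $t\geq 1$, so the polynomial $f$ has infinitely many roots and vanishes identically; hence the cardinalities agree for all $m\geq q$, and together with the automatic inclusion from Step 1 this upgrades to the set equality $m(\relint P)\cap\ZZ^d=\alpha+\big((m-q)P\cap\ZZ^d\big)$ for all $m\geq q$ (the $t=1$ case also giving uniqueness of $\alpha$). Therefore $\omega=A(P)\cdot x^\alpha z^q\cong A(P)(-q)$, so $A(P)$ is Gorenstein, completing the equivalence. Beyond this bridging step, the only care needed is with the boundary value $m=q$ (using the convention $0\cdot P=\{0\}$) and with keeping track of the translations by the lattice vector $\alpha$ when intersecting with $\ZZ^d$.
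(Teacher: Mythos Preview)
The paper does not prove this theorem at all: Theorem~\ref{reflexivetheorem} is quoted as a result of De~Negri and Hibi \cite{den} and is used as a black box in the proof of Theorem~\ref{gorensteinrstablethm}. There is therefore no ``paper's own proof'' to compare against.

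That said, your sketch is a correct and essentially standard route to the De~Negri--Hibi criterion. The identification of $A(P)$ with a normal affine semigroup ring, Hochster's theorem for Cohen--Macaulayness, and the Danilov--Stanley description of the canonical module are the right ingredients, and your translation of ``$\omega$ is cyclic'' into the two lattice-point conditions in Step~1 is accurate. The use of Hibi's characterization of reflexivity in Step~2 is exactly how the original argument proceeds. Your Step~3 is the one place where you go slightly beyond a bare citation: the polynomial trick via Ehrhart--Macdonald reciprocity to pass from the subsequence $m\in q\ZZ_{\geq 1}$ to all $m\geq q$ is clean and correct, and it neatly handles what is indeed the only nontrivial bridging issue. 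The minor caveats you flag (the convention $0\cdot P=\{0\}$ and tracking the translation by $\alpha$) are the right ones, and you handle them properly. In short, your proposal is a sound proof of the cited theorem, but it is not something the present paper attempts or needs to supply.
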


Since Theorem \ref{reflexivetheorem} requires that the polytope be full-dimensional we consider $\varphi^{-1}\left(\rstablenk\right)$, 
where $\varphi:\R^{n-1}\longrightarrow H$ is the affine isomorphism 
$$\varphi:(\alpha_1,\alpha_2,\ldots,\alpha_{n-1})\longmapsto\left(\alpha_1,\alpha_2,\ldots,\alpha_{n-1},k-\left(\sum_{i=1}^{n-1}\alpha_i\right)\right).$$  
Notice that $\varphi$ is also a lattice isomorphism.  
Hence, we have the isomorphism of Ehrhart Rings as graded algebras
	$$A\left(\varphi^{-1}\left(\rstablenk\right)\right)\cong A\left(\rstablenk\right).$$
Let $\pnk:=\varphi^{-1}\left(\rstablenk\right)$, and recall from Theorem \ref{main theorem} that
   $$\rstablenk=\left(\bigcap_{\ell=1}^n\hlhalf\right)\cap\left(\bigcap_{\ell=1}^n\hlrhalf\right).$$


\subsection{The $H$-representation for $\pnk$.}

We now give a description of the facet-defining inequalities for $\pnk$ in terms of those defining $\rstablenk$.
In the following, it will be convenient to let $T=\{\ell,\ell+1,\ell+2,\ldots,\ell+r-1\}$ for $\ell\in[n]$.
We also let $T^c$ denote the complement of $T$ in $[n]$.  
Notice that for a fixed $1\leq r<\nkfloor$ and $\ell\in[n]$, the set $T$ is precisely the set of summands in the defining equation of the $(n-2)$-flat $\hlr$. 
The defining inequalities of $\pnk$ corresponding to the $(n-2)$-flats $\hlr$ come in two types, dependent on whether $n\notin T$ or $n\in T$.  
If $n\notin T$ then
$$\phlrhalf:=\varphi^{-1}\left(\hlrhalf\right)=\left\{(x_1,x_2,\ldots,x_{n-1})\in\R^{n-1}:\sum_{i\in T}x_i\leq1\right\}.$$
If $n\in T$ then
$$\phlrhalfn:=\varphi^{-1}\left(\hlrhalf\right)=\left\{(x_1,x_2,\ldots,x_{n-1})\in\R^{n-1}:\sum_{i\in T^c}x_i\geq k-1\right\}.$$
Similarly, if $\ell\neq n$ then 
$$\phlhalf:=\varphi^{-1}\left(\hlhalf\right)=\left\{(x_1,x_2,\ldots,x_{n-1})\in\R^{n-1}:x_i\geq0\right\}.$$
Finally, if $\ell=n$ then
$$\phlnhalf:=\varphi^{-1}\left(H_{n}^{(+)}\right)=\left\{(x_1,x_2,\ldots,x_{n-1})\in\R^{n-1}:\sum_{i=1}^{n-1}x_i\leq k \right\}.$$
Thus, we may write $\pnk$ as the intersection of closed halfspaces in $\R^{n-1}$
	$$\pnk=\left(\bigcap_{n\notin T}\phlrhalf\right)\cap\left(\bigcap_{n\in T}\phlrhalfn\right)\cap\left(\bigcap_{i=1}^{n-1}\phlhalf\right)\cap\phlnhalf.$$
To denote the supporting hyperplanes corresponding to these halfspaces we simply drop the superscripts $(+)$ and $(-)$.


\subsection{The codegree of $\pnk$.}

Given the above description of $\pnk$, we would now like to determine the smallest positive integer $q$ for which $q\pnk$ contains a lattice point in its relative interior.  
To do so, recall that for a lattice polytope $P$ of dimension $d$ we can define the \emph{(Ehrhart) $\delta$-polynomial} of $P$.  
If we write this polynomial as
	$$\delta_P(z)=\delta_0+\delta_1z+\delta_2z^2+\cdots+\delta_dz^d$$
then we call the coefficient vector $\delta(P)=(\delta_0,\delta_1,\delta_2,\ldots,\delta_d)$ the \emph{$\delta$-vector} of $P$.  
We let $s$ denote the degree of $\delta_P(z)$, and we call $q=(d+1)-s$ the \emph{codegree} of $P$.  
It is a consequence of Ehrhart Reciprocity that $q$ is the smallest positive integer such that $q P$ contains a lattice point in its relative interior \cite{beck}.  
Hence, we would like to compute the codegree of $\pnk$.  
To do so requires that we first prove two lemmas.
In the following let $q=\nkceiling$.
Our first goal is to show that there is at least one integer point in $\relint\left(q\pnk\right)$ for $1\leq r<\nkfloor$.
We then show that $q$ is the smallest positive integer for which this is true.
Recall that $q=\frac{n+\alpha}{k}$ for some $\alpha\in\{0,1,\ldots,k-1\}$.  
Also recall that for a fixed $n$ and $k$ we have the nesting of polytopes
\begin{equation*}
P_{n,k}\supset P_{n,k}^{\stab(2)}\supset P_{n,k}^{\stab(3)}\supset\cdots\supset P_{n,k}^{\stab\left(\left\lfloor\frac{n}{k}\right\rfloor-1\right)}\supset P_{n,k}^{\stab\left(\left\lfloor\frac{n}{k}\right\rfloor\right)}.
\end{equation*}
Hence, if we identify an integer point inside $\relint\left(q\floorminusstablenk\right)$ then this same integer point lives inside $\relint\left(q\pnk\right)$ for every $1\leq r<\nkfloor$.
Given these facts, we now prove two lemmas.

\begin{lem}\label{alphaoneorzero}
Suppose that $q=\nkceiling=\frac{n+\alpha}{k}$ where $\alpha\in\{0,1\}$.  Then the integer point $(1,1,\ldots,1)\in\R^{n-1}$ lies inside $\relint\left(q\pnk\right)$ for every $1\leq r<\nkfloor$.
\end{lem}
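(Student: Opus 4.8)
The plan is to read off the membership directly from the $H$-representation of $\pnk$ recorded above. Since $\rstablenk$ is $(n-1)$-dimensional by Theorem~\ref{dimension theorem} and $\varphi$ is an affine isomorphism onto the hyperplane $H$, the polytope $\pnk$, and hence also $q\pnk$, is full-dimensional in $\R^{n-1}$; therefore $\relint\left(q\pnk\right)$ is exactly the set of points that satisfy \emph{every} facet-defining inequality of $q\pnk$ strictly. Dilating the halfspaces $\phlrhalf$, $\phlrhalfn$, $\phlhalf$, and $\phlnhalf$ by the factor $q$, it thus suffices to verify that the point $(1,1,\ldots,1)$ satisfies
$$\sum_{i\in T}x_i<q\ (n\notin T),\qquad \sum_{i\in T^c}x_i>q(k-1)\ (n\in T),\qquad x_\ell>0\ (\ell\in[n-1]),\qquad \sum_{i=1}^{n-1}x_i<qk.$$
By the discussion preceding the lemma it is in fact enough to carry this out for $r=\nkfloor-1$, but the substitutions below are uniform in $r$.

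First I would dispatch the three easy families. For $n\notin T$ every index of $T$ lies in $[n-1]$, so at $(1,\ldots,1)$ the first inequality reads $r<q$, which holds because $r\le\nkfloor-1<\nkceiling=q$. The inequalities $x_\ell>0$ are immediate, and the last reads $n-1<qk$, which holds because $qk=n+\alpha\ge n>n-1$. For the family $\phlrhalfn$ note that when $n\in T$ the complement $T^c$ lies in $[n-1]$ and has $n-r$ elements, so the corresponding inequality becomes $n-r>q(k-1)$; writing $q(k-1)=qk-q=(n+\alpha)-q$, this reduces to the single inequality $q>r+\alpha$.

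The crux, and the one place the hypothesis $\alpha\in\{0,1\}$ is used, is to establish $q>r+\alpha$. When $\alpha=0$ we have $q=\nkfloor$, so $r\le q-1$ and $r+\alpha=r<q$. When $\alpha=1$ we have $n=qk-1$, so $\nkfloor=q-1$, hence $r\le q-2$ and $r+\alpha=r+1\le q-1<q$. In either case $q>r+\alpha$, completing the verification; as $(1,\ldots,1)$ is an integer point it therefore lies in $\relint\left(q\pnk\right)$ for every $1\le r<\nkfloor$. I expect this small case split on $\alpha$ to be the only real obstacle — everything else is a routine substitution into the inequalities supplied by Theorem~\ref{main theorem}.
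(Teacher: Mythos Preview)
Your proof is correct and follows essentially the same approach as the paper: both verify directly that $(1,\ldots,1)$ strictly satisfies the four families of facet-defining inequalities of $q\pnk$, splitting on $\alpha\in\{0,1\}$ for the family coming from $\phlrhalfn$. Your reduction of that last family to the single inequality $q>r+\alpha$ before the case split is slightly tidier than the paper's ad hoc verification in each case, but the argument is the same.
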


\begin{proof}
It suffices to show that $(x_1,x_2,\ldots,x_{n-1})=(1,1,\ldots,1)$ satisfies the set of inequalities

	\begin{enumerate}[(i)]
	\item $x_i>0$,  for $i\in[n-1]$,
	\item $\sum_{i=1}^{n-1}x_i< kq$,
	\item $\sum_{i\in T}x_i<q$, for $n\notin T$, and 
	\item $\sum_{i\in T^c}x_i>(k-1)q$, for $n\in T$.
	\end{enumerate}
We do this in two cases.  
First suppose that $\alpha=0$.  
Then $k$ divides $n$ and $q=\frac{n}{k}$.  
Clearly, (i) is satisfied.  
To see that (ii) is also satisfied simply notice that $n-1<kq$.
To see that (iii) is satisfied recall that $\#T=r$ and $r<\nkfloor=q$.  
Finally, to see that (iv) is satisfied notice that $\#T^c=n-r$.  
So we would like that $n-r>(k-1)q$.  
However, this follows quickly from the fact that $r<\frac{n}{k}$.

Now consider the case where $\alpha=1$.  
Recall that it suffices to consider the case when $r=\nkfloor-1$.
Inequalities (i), (ii), and (iii) are all satisfied in the same fashion as the case when $\alpha=0$.  
So we need only check that (iv) is also satisfied.  
Again we would like that $n-r>(k-1)q$.  
Notice since $\alpha=1$ then $k$ does not divide $n$, and so $\nkceiling=\nkfloor+1$.
Hence, $q=r+2$.
The desired inequality then follows from $n+2>n+\alpha$. 
Thus, whenever $\alpha\in\{0,1\}$, the lattice point $(1,1,\ldots,1)\in\relint\left(q\pnk\right)$ for every $1\leq r<\nkfloor$.
\end{proof}

Next we would like to identify an integer point in the relative interior of $q\pnk$ for $1\leq r<\nkfloor$ when $\alpha\geq2$.  
In this case, the point $(1,1,\ldots,1)$ does not always work, so we must identify another point.  
Recall that it suffices to identify such a point for $r=\nkfloor-1$.
To do so, we construct the desired point using the notions of $r$-stability.  
Fix $n$ and $k$ such that $q=\frac{n+\alpha}{k}$ for $\alpha\geq2$, and let $r=\nkfloor-1$.
This also fixes the value $\alpha\in\{2,3,\ldots,k-1\}$.
Since $r=\nkfloor-1$ we may construct an $r$-stable vertex in $\R^n$ as the characteristic vector of the set 
	$$\{n-r,n-2r,n-3r,\ldots,n-(k-1)r\}\subset[n].$$
Notice that there are at least $r$ 0's between the $n^{th}$ coordinate of the vertex and the $n-(k-1)r^{th}$ coordinate (read from right-to-left modulo $n$).
In particular, this implies that the $n^{th}$ coordinate (and the $1^{st}$ coordinate) is occupied by a 0.
To construct the desired vertex replace the 1's in coordinates 
	$$n-(\alpha+1)r,n-(\alpha+2)r,\ldots,n-(k-1)r$$
with 0's.
Now add 1 to each coordinate of this lattice point.
If the resulting point is $(x_1,x_2,\ldots,x_n)$ then replace $x_n=1$ with the value $kq-\left(\sum_{i=1}^{n-1}x_i\right).$
Call the resulting vertex $\epsilon^\alpha$, and consider the isomorphism $\widetilde{\varphi}:\R^{n-1}\longrightarrow H_q$, defined analogously to $\varphi$, where
$H_q$ is the hyperplane in $\R^n$ defined by the equation $\sum_{i=1}^nx_i=kq$.
Notice that by our construction of $\epsilon^\alpha$, the point $\widetilde{\varphi}^{-1}\left(\epsilon^\alpha\right)$ is simply $\epsilon^\alpha$ with the last coordinate projected off.

\begin{lem}\label{alphageqtwo}
Suppose that $q=\nkceiling=\frac{n+\alpha}{k}$ for $\alpha\in\{2,3,\ldots,k-1\}$.  
Then the lattice point $\widetilde{\varphi}^{-1}\left(\epsilon^\alpha\right)$ lies inside $\relint\left(q\pnk\right)$ for every $1\leq r<\nkfloor$. 

\end{lem}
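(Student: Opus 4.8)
The plan is to show that $\widetilde{\varphi}^{-1}(\epsilon^\alpha)$ satisfies the same four strict inequalities (i)--(iv) used in the proof of Lemma~\ref{alphaoneorzero} (with the understanding that we may take $r=\nkfloor-1$, since the nesting of the $\pnk$ guarantees that a point in $\relint(q\floorminusstablenk)$ lies in $\relint(q\pnk)$ for every $1\leq r<\nkfloor$). Concretely, writing $\epsilon^\alpha=(x_1,\dots,x_n)$ for the lattice point in $H_q$ constructed just before the statement, I must verify: $x_i>0$ for $i\in[n-1]$; $\sum_{i=1}^{n-1}x_i<kq$; $\sum_{i\in T}x_i<q$ for every $T$ with $n\notin T$; and $\sum_{i\in T^c}x_i>(k-1)q$ for every $T$ with $n\in T$, where $T=\{\ell,\dots,\ell+r-1\}$ runs over all windows of $r$ consecutive residues mod $n$.

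First I would record the coordinate description of $\epsilon^\alpha$ explicitly. Before adding the all-ones vector, the underlying $r$-stable-type $0/1$ vector has $1$'s exactly in coordinates $n-r,n-2r,\dots,n-\alpha r$ (that is, $\alpha$ ones, after the deletions), each consecutive pair separated by exactly $r-1$ zeros, and with at least $r$ zeros wrapping around through coordinate $n$ and coordinate $1$. After adding $(1,1,\dots,1)$, the first $n-1$ coordinates become $1$ or $2$, so (i) is immediate, and in fact $\sum_{i=1}^{n-1}x_i = (n-1) + (\text{number of original ones among coordinates }1,\dots,n-1)$. Since coordinate $n$ originally held a $0$, all $\alpha$ ones sit among coordinates $1,\dots,n-1$, giving $\sum_{i=1}^{n-1}x_i = n-1+\alpha$; then $x_n = kq - (n-1+\alpha) = (n+\alpha) - (n-1+\alpha) = 1$. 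So in fact $x_n=1>0$ as well, the point is genuinely interior, and (ii) reads $n-1+\alpha < kq = n+\alpha$, which is clear.

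For (iii) and (iv) the key observation is that any window $T$ of $r$ consecutive indices contains at most $\lceil r/r\rceil$... more precisely, since the original ones are $r-1$ apart, a window of $r$ consecutive coordinates contains at most two of the $\alpha$ original ones, hence $\sum_{i\in T}x_i \leq r + 2$; and $q = r+2$ exactly when $\alpha=1$, but here $q=\frac{n+\alpha}{k}$ with $\alpha\geq 2$, so I need to pin down $q$ in terms of $r$. Since $r=\nkfloor-1$ and $k\nmid n$ (as $\alpha\geq2$), we have $\nkceiling=\nkfloor+1=r+2$, so $q=r+2$; thus the bound $\sum_{i\in T}x_i\leq r+2=q$ is not strict in general and this crude count is insufficient. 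The fix is to use the geometry more carefully: a window of $r$ consecutive coordinates contains two original ones only if its endpoints are exactly the two positions of such a pair, and in that case the window straddles a block that does \emph{not} wrap through coordinate $n$; but for (iii) we also need $n\notin T$, and I will check directly that a window avoiding coordinate $n$ picks up at most one of the ones lying among $n-r,\dots,n-\alpha r$ unless it is one specific window, for which a separate sharper estimate (using that that window then misses other structure) gives strict inequality. For (iv), complementarily, $\sum_{i\in T^c}x_i = \sum_{i=1}^n x_i - \sum_{i\in T}x_i = kq - \sum_{i\in T}x_i$, and since $n\in T$ forces $x_n=1\in$ the sum being subtracted and forces $T$ to contain the wrap-around block of zeros, one shows $\sum_{i\in T}x_i \leq r+1 < q$, i.e. $\sum_{i\in T^c}x_i \geq kq-(r+1) > (k-1)q$ follows once $q>r+1$, which holds since $q=r+2$.

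The main obstacle, then, is the borderline case of inequality (iii): ruling out a window $T$ with $n\notin T$ for which $\sum_{i\in T}x_i$ could reach $q=r+2$. I expect to handle it by arguing that such a window would have to be exactly $T=\{n-2r, n-2r+1,\dots, n-r-1\}$ (the unique $r$-window whose two endpoints are an adjacent pair of original one-positions), and then to observe that after adding the all-ones vector the \emph{interior} coordinates of this window are all exactly $1$ (not $2$), so $\sum_{i\in T}x_i = 2 + (r-2)\cdot 1 = r$, comfortably below $q$; a symmetric check disposes of any other window. Once (i)--(iv) are all verified I conclude, exactly as in Lemma~\ref{alphaoneorzero}, that $\widetilde{\varphi}^{-1}(\epsilon^\alpha)\in\relint(q\pnk)$ for every $1\leq r<\nkfloor$.
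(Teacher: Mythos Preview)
Your overall strategy is exactly the paper's: reduce to $r=\nkfloor-1$, then verify the four strict inequalities (i)--(iv) for the explicit point $\widetilde{\varphi}^{-1}(\epsilon^\alpha)$. Your treatment of (i), (ii), and (iv) is correct and matches the paper (the paper phrases (iv) directly rather than via the complement identity $\sum_{i\in T^c}x_i=kq-\sum_{i\in T}x_i$, but the content is the same, and the key numerical fact $q=r+2$ is used identically).

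The gap is in (iii). You say the original ones are ``$r-1$ apart'' and hence a length-$r$ window might contain two of them, leading you into an unnecessary and error-prone case analysis. In fact the $2$-positions are $n-r,n-2r,\ldots,n-\alpha r$, so consecutive $2$'s are at distance exactly $r$, while any window $T=\{\ell,\ldots,\ell+r-1\}$ has diameter $r-1$; hence $T$ can contain \emph{at most one} of the $2$'s. This gives immediately
\[
\sum_{i\in T}x_i\in\{r,\,r+1\},
\]
and since $k\nmid n$ we have $r+1=\nkfloor<\nkceiling=q$, so (iii) is strict with no further work. This is precisely the paper's argument (``the $2$'s are separated by at least $r-1$ $0$'s''). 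Your attempted fix is built on a false premise: the window $T=\{n-2r,\ldots,n-r-1\}$ you single out does \emph{not} have both endpoints equal to $2$ (its right endpoint $n-r-1$ is a $1$), and your count $2+(r-2)\cdot 1=r$ only accounts for $r-1$ coordinates; the correct value for that window is $2+(r-1)\cdot 1=r+1$. Once you replace your case analysis with the one-line diameter observation, the proof is complete and coincides with the paper's.
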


\begin{proof}
It suffices to show that when $r=\nkfloor-1$ the lattice point  \\
$(x_1,x_2,\ldots,x_{n-1})=\widetilde{\varphi}^{-1}\left(\epsilon^\alpha\right)$ satisfies inequalities (i), (ii), (iii), and (iv) from the proof of Lemma \ref{alphaoneorzero}.
It is clear that (i) is satisfied.  
To see that (ii) is satisfied notice that $\sum_{i=1}^{n-1}x_i=n-1+\alpha.$
This is because $\alpha$ coordinates of $\widetilde{\varphi}^{-1}$ are occupied by $2$'s and all other coordinates are occupied by $1$'s.  
Thus, inequality (ii) is satisfied since $n-1+\alpha<kq$.
To see that (iii) is satisfied first notice that for $T$ with $n\notin T$
	$$\sum_{i\in T}x_i=\begin{cases} r & \mbox{ if $T$ contains no entry with value $2$,} \\ r+1 & \mbox		{ otherwise.}\end{cases}$$
This is because we have chosen the 2's to be separated by at least $r-1$ 0's.
Thus, since $k$ does not divide $n$ we have that $\sum_{i\in T}x_i\leq r+1=\nkfloor<q.$
Finally, to see that (iv) is satisfied first notice that for $T$ with $n\in T$
	$$\sum_{i\in T^c}x_i=\begin{cases} n-r+\alpha-1 & \mbox{ if $T$ contains an entry with value $2$,} 		\\ n-r+\alpha & \mbox{ otherwise.}\end{cases}$$
Hence, we must show that $n-r+\alpha-1>(k-1)q$.  
However, since $\nkceiling=\nkfloor+1$ then $r=q-2$, and so the desired inequality follows from $n+\alpha+1>n+\alpha$.
Therefore, $\widetilde{\varphi}^{-1}\left(\epsilon^\alpha\right)\in\relint\left(q\pnk\right)$ for every $1\leq r<\nkfloor$.
\end{proof}

Using Lemmas \ref{alphaoneorzero} and \ref{alphageqtwo} we now show that $q=\nkceiling$ is indeed the codegree of these polytopes.

\begin{thm}\label{codegree}
Let $1\leq r<\nkfloor$.  The codegree of $\pnk$ is $q=\nkceiling$.
\end{thm}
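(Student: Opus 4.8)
The plan is to establish the codegree equals $q=\nkceiling$ by showing two things: first, that $q\pnk$ contains a lattice point in its relative interior, and second, that no smaller dilate does. The first part is already handled: by Lemmas~\ref{alphaoneorzero} and \ref{alphageqtwo}, writing $q=\frac{n+\alpha}{k}$ with $\alpha\in\{0,1,\ldots,k-1\}$, we have produced an explicit lattice point in $\relint(q\pnk)$ in all cases (the point $(1,\ldots,1)$ when $\alpha\in\{0,1\}$ and $\widetilde{\varphi}^{-1}(\epsilon^\alpha)$ when $\alpha\geq 2$). Since codegree is, by Ehrhart Reciprocity, the least positive integer $p$ with $p\,(\relint P)\cap\ZZ^{n-1}\neq\emptyset$, it remains only to rule out $p<q$.

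For the lower bound, I would argue by contradiction: suppose $p\pnk$ contains an interior lattice point $(x_1,\ldots,x_{n-1})$ with $1\le p\le q-1 = \nkceiling-1$. Pull this back up to the hyperplane picture via $\varphi$ (suitably scaled): the corresponding point in $\R^n$ is $(x_1,\ldots,x_{n-1},\,pk-\sum_{i=1}^{n-1}x_i)$, and interiority of the point for $p\pnk$ translates, via Theorem~\ref{main theorem}, into the strict inequalities: every coordinate $y_i>0$ (here using that $x_\ell\ge 0$ are facets and $H_n^{(+)}$ gives the last coordinate), and $\sum_{i=\ell}^{\ell+r-1}y_i<p$ for every $\ell\in[n]$ (the strict form of the facet inequality $\sum x_i\le 1$, dilated by $p$). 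Summing the $n$ inequalities $\sum_{i=\ell}^{\ell+r-1}y_i<p$ over all $\ell\in[n]$ counts each coordinate $y_i$ exactly $r$ times, giving $r\sum_{i=1}^n y_i < np$, i.e. $rpk < np$ since $\sum y_i = pk$ on the hyperplane $H$ dilated to $pH$. Hence $rk<n$, which is consistent (it just restates $r<\nkfloor$) — so a cruder bound is needed. The sharper route: since each $y_i\ge 1$ (integrality plus $y_i>0$), the inequality $\sum_{i=\ell}^{\ell+r-1}y_i<p$ forces $p\ge r+1$ trivially, but more importantly one partitions $[n]$ into blocks. Partition $[n]$ into $\nkfloor$ consecutive blocks of size $r$ (possible since $r\nkfloor\le n$) plus a leftover block of size $n-r\nkfloor<r$; actually it is cleaner to take $\lceil n/r\rceil$ blocks. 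On each full block of size $r$, the facet inequality gives $\sum_{\text{block}}y_i\le p-1$. Counting: the union of $\lfloor n/r\rfloor$ disjoint length-$r$ blocks contributes at most $\lfloor n/r\rfloor(p-1)$ to $\sum y_i$, and the remaining $n-r\lfloor n/r\rfloor$ coordinates each contribute at least $1$... I would instead choose the blocks to exactly tile a large portion and bound $pk=\sum_{i=1}^n y_i$ from above by $\nkfloor(p-1) + (\text{small remainder bounded below by each }y_i\ge1)$, then compare with $pk=\frac{(n+\alpha')}{\,\cdot\,}$; the arithmetic should collapse to $p\ge q$.

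The main obstacle is getting the block-counting estimate tight enough: a naive sum over all $n$ cyclic windows only recovers the known constraint $rk<n$, so the argument must exploit integrality ($y_i\ge1$) together with a careful choice of \emph{disjoint} windows covering $[n]$. The cleanest formulation is: choose $m=\nkfloor$ pairwise disjoint cyclic intervals of length $r$ (which fit since $mr\le n$) whose union misses $n-mr$ coordinates; then $pk=\sum_{i=1}^n y_i = \sum_{j=1}^{m}\big(\sum_{i\in I_j}y_i\big) + \sum_{i\notin\bigcup I_j}y_i \le m(p-1) + (n-mr)\cdot\infty$ — so disjoint windows alone do not bound things above; rather I must bound $pk$ from \emph{below} using $y_i\ge 1$ and from \emph{above} using the window inequalities in a way that conflicts. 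The correct tension: $pk=\sum y_i \ge n$ (each $y_i\ge1$), while on the other hand, writing $n = mr + t$ with $m=\nkfloor$, $0\le t<r$, and summing over $m$ disjoint length-$r$ windows plus one window of length $t$, we get $\sum y_i \le m(p-1) + (\text{window of length }t)$, and the length-$t$ window, being a sub-sum of a length-$r$ window, is also $\le p-1$; hence $pk=\sum y_i\le (m+1)(p-1)$ when $t>0$ and $\le m(p-1)$ when $t=0$. Combined with $pk\ge n$: in the divisible case $t=0$, $n\le \nkfloor(p-1)$, i.e. $p-1\ge n/\nkfloor = k$ (as $\nkfloor=n/k$), so $p\ge k+1 = q$, as desired. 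In the non-divisible case, $n\le(\nkfloor+1)(p-1)$, so $p-1\ge n/(\nkfloor+1)=n/\nkceiling$, whence $p\ge \nkceiling + \text{(something)}$; pushing the floor/ceiling arithmetic through, using $q=\nkceiling$ and $q=\frac{n+\alpha}{k}$, yields $p\ge q$. This computation is routine once the block decomposition is fixed, so I would present the window decomposition as the key lemma-step and then dispatch the two parity cases, concluding $p\ge q$ and therefore, with the upper bound from Lemmas~\ref{alphaoneorzero}–\ref{alphageqtwo}, that the codegree is exactly $q=\nkceiling$.
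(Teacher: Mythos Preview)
Your upper bound (via Lemmas~\ref{alphaoneorzero} and \ref{alphageqtwo}) matches the paper. For the lower bound the paper takes a different, much shorter route: since $\pnk$ sits inside the hypersimplex, Stanley's monotonicity theorem gives $\delta\big(\pnk\big)\leq\delta\big(\hypernk\big)$ coordinatewise, so the codegree of $\pnk$ is at least that of $\hypernk$, and Katzman computed the latter to be $\nkceiling$. No window combinatorics appears.

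Your direct approach is salvageable, but as written it is both overcomplicated and incorrect in places. The salvage is in fact a single line you already wrote down and then abandoned: any interior lattice point of $p\,\rstablenk$ (viewed in $\R^n$) satisfies $y_i\geq 1$ for all $i$ and $\sum_{i=1}^n y_i=pk$, hence $pk\geq n$, so $p\geq\nkceiling=q$. That is the whole lower bound; the cyclic windows $\sum_{i=\ell}^{\ell+r-1}y_i\leq p-1$ are not needed at all.

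The block argument you built on top of this has genuine errors. You set $m=\nkfloor$ while simultaneously writing $n=mr+t$ with $0\leq t<r$; the latter forces $m=\lfloor n/r\rfloor$, and these two values generally disagree (e.g.\ $n=100$, $k=5$, $r=19$ gives $\nkfloor=20$ but $\lfloor n/r\rfloor=5$), so your claim ``$mr\leq n$'' for $m=\nkfloor$ is false. The ensuing arithmetic ``$p-1\geq n/\nkfloor=k$, so $p\geq k+1=q$'' is also wrong: $n/\nkfloor$ equals $k$ only when $k\mid n$, and even then $q=n/k$, not $k+1$. Drop the windows and keep only $pk\geq n$; then your argument is correct, elementary, and in fact reproves Katzman's codegree computation for the hypersimplex as a special case, which is a small bonus the paper's citation-based proof does not give.
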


\begin{proof}
First recall that $\pnk$ is a subpolytope of $\hypernk$.  
By a theorem of Stanley \cite{stan4} it then follows that $\delta\left(\pnk\right)\leq\delta\left(\hypernk\right)$.  
Therefore, the codegree of $\pnk$ is no smaller than the codegree of $\hypernk$.  
In \cite[Corollary 2.6]{kat}, Katzman determines that the codegree of $\hypernk$ is $q=\nkceiling$.  
Since Lemmas \ref{alphaoneorzero} and \ref{alphageqtwo} imply that $q\pnk$ contains a lattice point inside its relative interior we conclude that the codegree of $\pnk$ is $q=\nkceiling$.
\end{proof}

Recall that if an integral polytope $P$ of dimension $d$ with codegree $q$ is Gorenstein then 
	$$\#\left(\relint\left(qP\right)\cap\ZZ^d\right)=1.$$
With this fact in hand, we have the following corollary.

\begin{cor}\label{alphageqtwonotgorenstein}
Suppose that $q=\nkceiling=\frac{n+\alpha}{k}$, where $\alpha\in\{2,3,\ldots,k-1\}$.  
Then the polytope $\rstablenk$ is not Gorenstein for every $1\leq r<\nkfloor$.
\end{cor}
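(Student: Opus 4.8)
The plan is to show that when $\alpha\in\{2,3,\dots,k-1\}$ the dilation $q\pnk$ contains \emph{more than one} lattice point in its relative interior, which by the standard fact recalled just above (a Gorenstein polytope of codegree $q$ has exactly one interior lattice point in its $q$-th dilate) immediately forces $\rstablenk$ to fail to be Gorenstein. By Theorem~\ref{codegree} we know the codegree is exactly $q=\nkceiling$, so it does make sense to test $q\pnk$ for interior lattice points; the point is that there will be at least two. Since $\pnk \supseteq \floorminusstablenk$ for every $1\leq r<\nkfloor$ (recall the nesting of the $P$-polytopes), it suffices to exhibit two distinct lattice points in $\relint\bigl(q\floorminusstablenk\bigr)$, i.e.\ to work with $r=\nkfloor-1$; any such pair then lies in $\relint(q\pnk)$ for all $1\leq r<\nkfloor$ simultaneously.

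First I would produce the \emph{first} interior point: this is exactly $\widetilde{\varphi}^{-1}\!\left(\epsilon^\alpha\right)$, which Lemma~\ref{alphageqtwo} already certifies lies in $\relint\left(q\pnk\right)$ for $\alpha\geq 2$. The remaining task is to produce a \emph{second}, distinct lattice point. The natural candidate is a ``rotated'' or ``relabeled'' copy of $\epsilon^\alpha$: because the facet structure described in Theorem~\ref{main theorem} (the inequalities $x_\ell\geq 0$ and $\sum_{i=\ell}^{\ell+r-1}x_i\leq 1$) is invariant under the cyclic shift $i\mapsto i+1$ of the coordinates of $\R^n$, applying a cyclic rotation to the vertex $\epsilon^\alpha\in H_q$ produces another lattice point of $qH\cap\ZZ^n$ satisfying all the strict versions of the facet inequalities, hence another lattice point of $\relint\left(q\rstablenk\right)$ — provided the rotation actually moves the point. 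Concretely, I would check that the ``$2$''-coordinates of $\epsilon^\alpha$ (there are $\alpha\leq k-1$ of them, and by construction they are spread out with at least $r-1$ zeros between consecutive ones in the pre-normalization vertex) do not occupy a cyclically-symmetric position, so that some rotation gives a genuinely new point; the condition $\alpha<k$ (strict, since $\alpha\in\{2,\dots,k-1\}$) is what guarantees the configuration is not fixed by a nontrivial rotation. One then transports this second point back through $\widetilde\varphi^{-1}$ and verifies inequalities (i)--(iv) exactly as in the proof of Lemma~\ref{alphageqtwo} — the verification is identical since those inequalities only depend on the multiset of coordinate values and on the spacing of the $2$'s, both of which are preserved by a cyclic shift.

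The main obstacle I anticipate is ensuring the two lattice points are genuinely distinct and that the chosen second point is honestly in the \emph{relative interior} (strict inequalities), not merely on a facet: one must pick the rotation amount carefully so that no window $\{\ell,\ell+1,\dots,\ell+r-1\}$ of the rotated point sums to $q$ and no coordinate becomes $0$ after dilation-normalization. A clean way around this is to choose as the second point a rotation by a single step (or by $r$ steps, aligning with the block structure of $\epsilon^\alpha$) and argue directly that the ``$2$''-pattern, being non-periodic under $\ZZ/n\ZZ$ because $\alpha$ of the $k$ potential slots are filled with $\alpha<k$, must move; then the strict inequalities are inherited verbatim from Lemma~\ref{alphageqtwo}. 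Once two distinct interior lattice points of $q\pnk$ are in hand, the conclusion is immediate: $\#\bigl(\relint(q\pnk)\cap\ZZ^{n-1}\bigr)\geq 2$, so $\pnk$ — and hence the lattice-isomorphic polytope $\rstablenk$ — is not Gorenstein, for every $1\leq r<\nkfloor$.
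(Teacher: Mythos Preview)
Your proposal is correct and follows essentially the same route as the paper's proof: exhibit two distinct lattice points in $\relint(q\pnk)$ and invoke the fact that a Gorenstein polytope has a unique interior lattice point in its codegree dilate. The paper produces its second point $\zeta^\alpha$ by cyclically shifting the vector $(x_1,\ldots,x_n)$ underlying $\epsilon^\alpha$ one step to the left (noting $x_1=1$ so the construction reproduces a valid interior point); since in fact $\epsilon^\alpha=(x_1,\ldots,x_n)$ already, this is exactly a one-step cyclic rotation of $\epsilon^\alpha$, which is precisely your proposed move.

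One minor simplification: your worry about ``picking the rotation amount carefully so that no window sums to $q$'' is unnecessary. Because $q\rstablenk\subset\R^n$ is invariant under every cyclic permutation of coordinates, \emph{any} rotation of an interior point remains interior; the only thing to check is distinctness, and that is immediate since $\epsilon^\alpha$ has both $1$'s and $2$'s among its coordinates (as $0<\alpha<n$) and hence is not a constant vector.
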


\begin{proof}
Recall the vertex $(x_1,x_2,\ldots,x_n)$ from which we produce $\epsilon^\alpha$.
Since $x_1=1$ then cyclically shifting the entries of this vertex one entry to the left, and then applying the construction for $\epsilon^\alpha$ results in a second vertex, say $\zeta^\alpha$, such that $\widetilde{\varphi}\left(\zeta^\alpha\right)^{-1}$ also lies in the relative interior of $q\pnk$. 
Thus, $\#\left(\relint\left(q\pnk\right)\cap\ZZ^d\right)>1,$ and we conclude that $\rstablenk$ is not Gorenstein.
\end{proof}


\subsection{Gorenstein $r$-stable hypersimplices and unimodal $\delta$-vectors.}

Notice that by Corollary \ref{alphageqtwonotgorenstein} we need only consider those $r$-stable hypersimplices satisfying the conditions of Lemma \ref{alphaoneorzero}.
For these polytopes we now consider the translated integral polytope
	$$Q:=q\pnk-(1,1,\ldots,1).$$
From our $H$-representation of $\pnk$ we see that the facets of $Q$ are supported by the hyperplanes
	\begin{enumerate}[(a)]
	\item $x_i=-1$,  for $i\in[n-1]$,
	\item $\sum_{i=1}^{n-1}x_i=kq-(n-1)$,
	\item $\sum_{i\in T}x_i=q-r$, for $n\notin T$, and 
	\item $\sum_{i\in T^c}x_i=(k-1)q-(n-r)$, for $n\in T$.
	\end{enumerate}
Given this collection of hyperplanes we may now prove the following theorem.

\begin{thm}\label{gorensteinrstablethm}
Let $1\leq r<\nkfloor$.  Then $\rstablenk$ is Gorenstein if and only if $n=kr+k$.
\end{thm}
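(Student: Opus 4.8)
The plan is to apply the De Negri--Hibi criterion (Theorem~\ref{reflexivetheorem}) to the translated polytope $Q := q\pnk - (1,1,\ldots,1)$, where $q = \nkceiling$ is the codegree computed in Theorem~\ref{codegree}. By Corollary~\ref{alphageqtwonotgorenstein} we may restrict attention to the case $\alpha \in \{0,1\}$, i.e.\ $n = kq$ or $n = kq-1$, since otherwise $\rstablenk$ is already known not to be Gorenstein; moreover, under this restriction $(1,1,\ldots,1)$ is the unique interior lattice point of $q\pnk$ by Lemma~\ref{alphaoneorzero}, so $Q$ is a legitimate candidate and $P$ is Gorenstein if and only if $Q$ is reflexive. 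The strategy is then to compute the vertices of the dual polytope $Q^\star$ directly from the facet-supporting hyperplanes (a)--(d) listed above, using the standard fact that a hyperplane $\sum_i \alpha_i x_i = 1$ supports a facet of $Q$ exactly when $(\alpha_1,\ldots,\alpha_{n-1})$ is a vertex of $Q^\star$. Reflexivity of $Q$ is equivalent to each of these vertices being an integer vector, so the whole theorem reduces to an arithmetic condition on when the rescaled coefficients of (a)--(d) are all integral.

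The key computation is to rewrite each facet equation of $Q$ in the normalized form $\sum_i \alpha_i x_i = 1$ and read off the vertex of $Q^\star$. Types (a), (b), and (c): the hyperplane $x_i = -1$ gives the vertex $-e_i$ (already integral); the hyperplane $\sum_{i=1}^{n-1} x_i = kq - (n-1)$ gives the vertex $\tfrac{1}{kq-n+1}(1,\ldots,1)$; and $\sum_{i\in T} x_i = q-r$ (for $n\notin T$) gives $\tfrac{1}{q-r}\e_T$, where $\e_T$ is the indicator vector of $T$. Type (d): $\sum_{i\in T^c} x_i = (k-1)q - (n-r)$ gives $\tfrac{1}{(k-1)q-n+r}\e_{T^c}$. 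Since the dimension is $n-1 > 1$, integrality of each such vertex is equivalent to the corresponding denominator being $\pm 1$. Thus $Q$ is reflexive if and only if $kq-n+1 = 1$, $q - r = 1$, and $(k-1)q - n + r = 1$ all hold (the denominators are positive in the relevant range, so "$=\pm1$" collapses to "$=1$"); one checks these three conditions are not independent. The first condition forces $\alpha = 1$ is excluded and in fact $n = kq - k + 1$... here one must be careful: $kq - n + 1 = 1$ means $n = kq$, i.e.\ $\alpha = 0$ and $q = n/k$; then $q - r = 1$ means $r = n/k - 1$; and substituting, $(k-1)q - n + r = (k-1)(n/k) - n + (n/k - 1) = -1$, so this last denominator is $-1$, still a unit. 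Hence all three hold simultaneously precisely when $n = kq$ and $r = q-1$, i.e.\ $n = k(r+1) = kr + k$.

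I would organize the write-up as follows. First, dispose of the case $\alpha \geq 2$ by citing Corollary~\ref{alphageqtwonotgorenstein}, reducing to $\alpha \in \{0,1\}$. Second, invoke Lemma~\ref{alphaoneorzero} and Theorem~\ref{reflexivetheorem} to reduce the Gorenstein question to reflexivity of $Q = q\pnk - (1,\ldots,1)$. Third, enumerate the vertices of $Q^\star$ from hyperplanes (a)--(d) as above, noting which coefficients must be cleared. Fourth, set up the integrality conditions and solve the resulting system, arriving at $n = kr+k$; conversely, verify that when $n = kr+k$ all four families of dual vertices are integral (here the case $n = kr+k$ indeed gives $\alpha = 0$ since $k \mid n$, so we are in the regime covered by Lemma~\ref{alphaoneorzero}, and one should double-check that all $2n$ facets of $\pnk$ survive the translation and normalization, using Theorem~\ref{main theorem} together with the $H$-representation of $\pnk$).

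The main obstacle I anticipate is bookkeeping rather than conceptual: carefully tracking the two types of $\hlr$-facets (those with $n \notin T$ versus $n \in T$) through the isomorphism $\varphi$ and the translation by $(1,\ldots,1)$, making sure no facet is lost or doubled and that the signs of the denominators are correctly pinned down so that "unit" really does mean "equals $1$" (or $-1$) in each case. A secondary subtlety is confirming that when $\alpha = 1$ the required conditions genuinely fail — i.e.\ that $n = kq$ is forced — so that the only Gorenstein members occur when $k \mid n$; this needs a clean contradiction from the simultaneous integrality demands rather than an ad hoc check. Once the denominator conditions are correctly stated, solving them is elementary linear arithmetic in $n, k, q, r$ with the relation $q = \nkceiling$.
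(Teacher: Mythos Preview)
Your proposal is correct and follows essentially the same route as the paper: reduce to $\alpha\in\{0,1\}$ via Corollary~\ref{alphageqtwonotgorenstein}, translate by $(1,\ldots,1)$, read off the dual vertices from the normalized facet equations (a)--(d), and solve the resulting unit-denominator conditions to obtain $n=kr+k$. One small correction: Lemma~\ref{alphaoneorzero} only asserts that $(1,\ldots,1)$ \emph{is} an interior lattice point, not that it is unique, but Theorem~\ref{reflexivetheorem} does not require uniqueness, so this does not affect your argument.
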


\begin{proof}
By Theorem \ref{reflexivetheorem} we must determine when all the vertices of $Q^\star$ are integral.  
We do so by means of the inclusion-reversing bijection between the faces of $Q$ and the faces of $Q^\star$.  
It is immediate that the vertices of $Q^\star$ corresponding to hyperplanes given in (a) are integral.  
So consider the hyperplane given in (b).  
Recall that $q=\nkceiling=\frac{n+\alpha}{k}$ for some $\alpha\in\{0,1\}$. 
Hence, this hyperplane is equivalently expressed as
	$$\sum_{i=1}^{n-1}\frac{1}{\alpha+1}x_i=1.$$
Therefore, the corresponding vertex in $Q^\star$ is integral only if $\alpha=0$.  
Notice next that the hyperplanes given in (c) will have corresponding vertex of $Q^\star$ integral only if $q-r=1$.  
Since $\alpha=0$ we have that $q=\frac{n}{k}$ where $k$ divides $n$, and so it must be that $n=kr+k$.
Finally, when $n=kr+k$ the hyperplanes given in (d) reduce to 
	$$\sum_{i\in T^c}x_i=-1.$$
Hence, the corresponding vertex of $Q^\star$ is integral, and we conclude that, for $1\leq r<\nkfloor$, the polytope $\rstablenk$ is Gorenstein if and only if $n=kr+k$.
\end{proof}

Theorem \ref{gorensteinrstablethm} demonstrates that the Gorenstein property is quite rare amongst the $r$-stable hypersimplices.  
It also enables us to expand the collection of $r$-stable hypersimplices known to have unimodal $\delta$-vectors.
Previously, this collection was limited to the case when $k=2$ or when $\rstablenk$ is a simplex \cite{braunsol}.
Theorem \ref{gorensteinrstablethm} provides a collection of $r$-stable hypersimplices with unimodal $\delta$-vectors for every $k\geq1$.

\begin{cor}\label{unimodalgorenstein}
Let $k\geq1$.  The $r$-stable $n,k$-hypersimplices $\rstablenk$ for $r\geq1$ and $n=kr+k$ have unimodal $\delta$-vectors.
\end{cor}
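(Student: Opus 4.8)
The plan is to deduce the corollary from Theorem~\ref{gorensteinrstablethm} together with two facts already recorded in the introduction: by \cite{braunsol} the polytope $\rstablenk$ always admits a regular unimodular triangulation (namely the restriction $\trinkr$ of the circuit triangulation), and by \cite{bruns} any Gorenstein lattice polytope carrying a regular unimodular triangulation has a unimodal $\delta$-vector. Thus, once we know that $\rstablenk$ is Gorenstein, unimodality follows at once. The only genuine task is therefore to check that Theorem~\ref{gorensteinrstablethm} actually applies under the hypotheses of the corollary, and to treat the degenerate value $k=1$ by hand.

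First I would dispose of the case $k=1$. A $1$-subset of $[n]$ contains no pair of distinct elements, so it is vacuously $r$-stable for every $r$; hence $\rstablenk=\Delta_{n,1}$, which is a lattice (unimodular) copy of the standard $(n-1)$-simplex. Its $\delta$-polynomial equals $1$, which is unimodal, so the corollary holds here; note this also covers the instance $n=kr+k=r+1$ arising when $k=1$.

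Next I would treat $k\geq2$. Fix $r\geq1$ and put $n=kr+k=k(r+1)$. I first verify the standing hypothesis $1<k<n-1$ of Section~\ref{gorenstein}: since $n-1=kr+k-1$, the inequality $k<n-1$ is equivalent to $kr>1$, which holds because $k\geq2$ and $r\geq1$. I then check $1\leq r<\nkfloor$: because $n/k=r+1$ is an integer, $\nkfloor=r+1>r\geq1$. Consequently Theorem~\ref{gorensteinrstablethm} applies, and since we are precisely in the case $n=kr+k$, it tells us that $\rstablenk$ is Gorenstein. Invoking the regular unimodular triangulation of $\rstablenk$ from \cite{braunsol} and the unimodality result of \cite{bruns} now yields that $\delta(\rstablenk)$ is unimodal, completing the proof for $k\geq2$.

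There is no substantial obstacle here: the proof is a short assembly of Theorem~\ref{gorensteinrstablethm} with the cited results, and the only point requiring care is the bookkeeping confirming that $n=kr+k$ is simultaneously compatible with $1<k<n-1$ and $1\leq r<\nkfloor$ (so that Theorem~\ref{gorensteinrstablethm} is in force), together with the separate elementary handling of $k=1$. One could in principle bypass \cite{bruns} by computing $\delta(\pnk)$ directly from the $H$-representation of $\pnk$ and the known $\delta$-vector of $\hypernk$, but the Gorenstein-plus-regular-unimodular-triangulation argument is far cleaner and is the route I would take.
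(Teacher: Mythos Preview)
Your proposal is correct and follows essentially the same approach as the paper's own proof: handle $k=1$ separately as the standard simplex, then for $k\geq2$ apply Theorem~\ref{gorensteinrstablethm} to get Gorenstein, invoke the regular unimodular triangulation from \cite{braunsol}, and conclude via \cite{bruns}. If anything, your version is slightly more careful in explicitly verifying the hypotheses $1<k<n-1$ and $1\leq r<\nkfloor$ needed to invoke Theorem~\ref{gorensteinrstablethm}.
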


\begin{proof}
By \cite[Corollary 2.6]{braunsol} there exists a regular unimodular triangulation of $\rstablenk$.  
By Theorem \ref{gorensteinrstablethm} the polytope $\rstablenk$ is Gorenstein for $n=kr+k$ when $k>1$.  
By \cite[Theorem 1]{bruns} we conclude that the $\delta$-vector of $\rstablenk$ is unimodal.
Finally, notice that when $k=1$ these polytopes are just the standard $(n-1)$-simplices.
\end{proof}


\end{document}